\newtheorem{theorem}{Theorem}[section]
\newtheorem{lemma}[theorem]{Lemma}
\newtheorem{conjecture}[theorem]{Conjecture}
\newtheorem{problem}[theorem]{Problem}
\newtheorem{question}[theorem]{Question}
\theoremstyle{remark}
\newtheorem{definition}[theorem]{Definition}
\newcommand{\cE}{\ensuremath{\mathcal{E}}}
\newcommand{\cM}{\ensuremath{\mathcal{M}}}
\newcommand{\cH}{\ensuremath{\mathcal{H}}}
\newcommand{\eps}{\varepsilon}
\newcommand{\size}[1]{\left| #1 \right|}				
\title{What is Ramsey-equivalent to a clique?}
\author{
Jacob Fox\thanks{
    Department of Mathematics,
    Massachusetts Institute of Technology,
    Cambridge, MA 02139-4307.
    Email: {\tt fox@math.mit.edu}.
    Research supported by a Packard Fellowship, by a Simons Fellowship, by NSF grant DMS-1069197, by a Sloan Foundation Fellowship, and by an MIT NEC Corporation Award.}  \and 
Andrey Grinshpun\thanks{
    Department of Mathematics,
    Massachusetts Institute of Technology,
    Cambridge, MA 02139-4307.
    Email: {\tt agrinshp@math.mit.edu}.
    Research supported by a National Physical Science Consortium Fellowship.} \and
Anita Liebenau\thanks{
	Department of Computer Science, University of Warwick, Coventry CV4 7AL, UK. This research 
	was done when the author was affiliated with the Institute of Mathematics, Freie Universit\"at Berlin, 14195 Berlin, 		Germany. Email: {\tt a.liebenau@warwick.ac.uk}. The author was supported by the Berlin Mathematical School. 
	The author would like to thank the Department of Mathematics,
    	Massachusetts Institute of Technology, Cambridge, MA 02139-4307 for its hospitality where this work was 			partially carried out.} 
	\and
Yury Person\thanks{Institute of Mathematics, Goethe-Universit\"at, 60325 Frankfurt am Main, Germany. This research 
	was done when the author was affiliated with the Institute of Mathematics, Freie Universit\"at Berlin. Email: {\tt person@math.uni-frankfurt.de}} \and
Tibor Szab\'o\thanks{
Institute of Mathematics, Freie Universit\"at Berlin, 14195 Berlin, Germany. Email: {\tt szabo@math.fu-berlin.de}}
}
\begin{document}
\maketitle

\begin{abstract}
A graph $G$ is Ramsey for $H$ if every two-colouring of the edges of $G$ contains a monochromatic copy of $H$. Two graphs $H$ and $H'$ are Ramsey-equivalent if every graph $G$ is Ramsey for $H$ if and only if it is Ramsey for $H'$. In this paper, we study the problem of determining which graphs are Ramsey-equivalent to the complete graph $K_k$. A famous theorem of Ne\v set\v ril and R\"odl implies that any graph $H$ which is Ramsey-equivalent to $K_k$ must contain $K_k$. We prove that the only connected graph which is Ramsey-equivalent to $K_k$ is itself. This gives a negative answer to the question of Szab\'o, Zumstein, and Z\"urcher on whether $K_k$ is Ramsey-equivalent to $K_k\cdot K_2$, 
the graph on $k+1$ vertices consisting of $K_k$ with a pendent edge. 

In fact, we prove a stronger result.  A graph $G$ is Ramsey minimal for a graph $H$ if it is Ramsey for $H$ but no proper subgraph of $G$ is Ramsey for $H$. Let $s(H)$ be the smallest minimum degree over all Ramsey minimal graphs for $H$. 
The study of $s(H)$ was introduced by Burr, Erd\H{o}s, and Lov\'asz, where they show that $s(K_k)=(k-1)^2$. We prove 
that $s(K_k \cdot K_2)=k-1$, and hence $K_k$ and $K_k\cdot K_2$ are not Ramsey-equivalent.

We also address the question of which non-connected graphs are Ramsey-equivalent to $K_k$. Let $f(k,t)$ be the maximum $f$ such that the graph $H=K_k+fK_t$, consisting of $K_k$ and $f$ disjoint copies of $K_t$, is Ramsey-equivalent to $K_k$.  Szab\'o, Zumstein, and Z\"urcher gave a lower bound on $f(k,t)$. We prove an upper bound on $f(k,t)$ which is roughly within a factor $2$ of the lower bound.
\end{abstract}
\section{Introduction}

A graph $G$ is {\it $H$-Ramsey} or {\em Ramsey for $H$}, denoted by $G \rightarrow H$, if any two-colouring of the edges of $G$ contains a monochromatic copy of $H$. The fact that for every graph $H$ there is a graph $G$ such that $G$ is $H$-Ramsey was first proved by Ramsey \cite{R30} in 1930 and rediscovered independently by Erd\H{o}s and Szekeres a few years later \cite{ES35}. Ramsey theory is currently one of the most active areas of combinatorics with connections to number theory, geometry, analysis, logic, and computer science. 

A fundamental problem in graph Ramsey theory is to understand the graphs $G$ that are $K_k$-Ramsey, where $K_k$ denotes the complete graph on $k$ vertices. The Ramsey number $r(H)$ is the minimum number of vertices of a graph $G$ which is $H$-Ramsey. The most famous question in this area is that of estimating the Ramsey number $r(K_k)$. Classical results of Erd\H{o}s \cite{E47} and Erd\H{o}s and Szekeres \cite{ES35} show that $2^{k/2} \leq r(K_k) \leq 2^{2k}$. While there have been several improvements on these bounds (see, for example, \cite{C09}), despite much attention, the constant factors in the above exponents remain the same. Given these difficulties, the field has naturally stretched in different directions. Many foundational results were proved in the 1970s which showed the depth and breadth of graph Ramsey theory. For instance, a famous theorem of Ne\v set\v ril and R\"odl \cite{nesetril1976} states that for every graph $H$ there is a graph $G$ with the same clique number as $H$ such that $G \rightarrow H$. 

Szab\'o, Zumstein, and Z\"urcher \cite{szz2010} defined two graphs $H$ and $H'$ to be {\em Ramsey-equivalent} if for every graph $G$, $G$ is $H$-Ramsey if and only if $G$ is $H'$-Ramsey. The result of  Ne\v set\v ril and R\"odl \cite{nesetril1976} above  implies that any graph $H$ which is Ramsey-equivalent to the clique $K_k$ must contain a copy of $K_k$. In this paper, we study the problem of determining which graphs are Ramsey-equivalent to $K_k$. In other words, knowing that $G$ is Ramsey for $K_k$, what additional monochromatic subgraphs must occur in any two-colouring of the edges of $G$?

In \cite{szz2010} it was conjectured that, for large enough $k$, the clique $K_k$ is Ramsey-equivalent to $K_k \cdot K_2$, the graph on $k+1$ vertices consisting of $K_k$ with a pendent edge. We settle this conjecture in the negative, showing that, for all $k$, the graphs $K_k$ and $K_k \cdot K_2$ are not Ramsey-equivalent. Together with the above discussion, this implies the following theorem. 

\begin{theorem}\label{equivmustdis}
Any graph which is Ramsey-equivalent to the clique $K_k$ must be the disjoint union of $K_k$ and a graph of smaller clique number. 
\end{theorem}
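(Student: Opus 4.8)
The plan is to deduce Theorem~\ref{equivmustdis} from two facts already isolated above: the consequence of Ne\v set\v ril--R\"odl that any graph Ramsey-equivalent to $K_k$ contains a copy of $K_k$, and the result that the only \emph{connected} graph Ramsey-equivalent to $K_k$ is $K_k$ itself. First I would record an elementary monotonicity observation: if $K_k\subseteq F\subseteq H$ and $H$ is Ramsey-equivalent to $K_k$, then $F$ is Ramsey-equivalent to $K_k$ as well. Indeed, $G\to F$ implies $G\to K_k$ since $K_k\subseteq F$; and conversely $G\to K_k$ implies $G\to H$ by hypothesis, hence $G\to F$ since $F\subseteq H$.

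Now let $H$ be Ramsey-equivalent to $K_k$. By Ne\v set\v ril--R\"odl, $H$ contains $K_k$, so at least one connected component of $H$ does. I would then argue that for \emph{every} connected component $C$ of $H$ with $K_k\subseteq C$, the monotonicity observation (with $F=C$) makes $C$ Ramsey-equivalent to $K_k$; being connected, $C$ must equal $K_k$ by the result quoted above. In particular such a $C$ has clique number exactly $k$, so no component of $H$ has clique number larger than $k$, and every component of $H$ whose clique number equals $k$ is a copy of $K_k$.

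It remains to rule out two distinct components both equal to $K_k$. If that occurred we would have $2K_k\subseteq H$, and the monotonicity observation (with $F=2K_k$) would force $2K_k$ to be Ramsey-equivalent to $K_k$; I would refute this by exhibiting a single graph that separates the two. Set $N=r(K_k)$, fix a $2$-colouring of $K_{N-1}$ with no monochromatic $K_k$, and adjoin a new vertex $v$ joined to all of $K_{N-1}$ by red edges, obtaining a $2$-colouring of $K_N$. In this colouring every monochromatic $K_k$ contains $v$: a blue one cannot be incident to $v$ and would therefore lie inside $K_{N-1}$, while a red one missing $v$ would already be a red $K_k$ inside $K_{N-1}$ --- both impossible. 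Hence there is no monochromatic copy of $2K_k$, so $K_N\not\to 2K_k$, whereas $K_N\to K_k$ since $N=r(K_k)$. This contradiction shows that exactly one component of $H$ equals $K_k$ and every other component has clique number strictly less than $k$, which is the assertion of the theorem.

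The only genuinely hard ingredient is the connected case, which is the main theorem of the paper; everything here is bookkeeping around it. The one step requiring a little care is the last one: the separating colouring of $K_N$ works precisely because the colouring of $K_{N-1}$ was chosen with \emph{no} monochromatic $K_k$ whatsoever, which is exactly what allows the single new vertex $v$ to absorb all monochromatic cliques and thereby destroy every monochromatic $2K_k$.
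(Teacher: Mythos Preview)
Your argument is correct and follows essentially the same route as the paper: the Ne\v set\v ril--R\"odl input forces $K_k\subseteq H$, the monotonicity observation together with the ``only connected graph'' result (equivalently, $K_k\cdot K_2\not\sim K_k$) pins down the component through $K_k$, and the $2K_k$ case is dispatched separately. The only difference is cosmetic: the paper cites $f(k,k)=0$ to \cite{szz2010} rather than writing out the $K_{r(K_k)}$ colouring, which you have supplied explicitly.
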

It is therefore natural to study the following function. Let $f(k,t)$ be the maximum $f$ such that $K_k$ and $K_k+f\cdot K_t$ are Ramsey-equivalent, where $K_k + f \cdot K_t$ denotes the disjoint union of a $K_k$ and $f$ copies of $K_t$. It is easy to see \cite{szz2010} that $f(k,k)=0$ and $f(k,1) = R(K_k)- k$.  For $t\leq k-2$, Szab\'{o} et al.~\cite{szz2010} proved the lower bound
\begin{equation}\label{lowerbound} 
f(k,t) \geq \frac{R(k,k-t+1)-2k}{2t}, 
\end{equation}
where $R(k,s)$ is the {\em Ramsey number} denoting the minimum $n$ such that every red-blue edge-colouring of $K_n$ contains a monochromatic red $K_k$ or a monochromatic blue $K_s$.

We prove the following theorem which, together with \eqref{lowerbound}, determines $f(k,t)$ up to roughly a factor $2$. 
\begin{theorem} \label{THM:DisjointCliques}
For $k > t\geq 3$,
\[  f(k,t) \leq \frac{R(k,k-t+1)-1}{t}. \]
\end{theorem}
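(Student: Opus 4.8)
The plan is to show that whenever $ft\ge R(k,k-t+1)$ there is a graph $G$ with $G\to K_k$ but $G\not\to K_k+fK_t$. Since $K_k+fK_t$ is a subgraph of $K_k+f'K_t$ for every $f'\ge f$, any two-colouring witnessing $G\not\to K_k+fK_t$ also witnesses $G\not\to K_k+f'K_t$, so it suffices to treat the smallest such $f$, namely $f=\lceil R(k,k-t+1)/t\rceil$; as $\lceil R(k,k-t+1)/t\rceil-1\le (R(k,k-t+1)-1)/t$, the bound on $f(k,t)$ follows. Put $q=R(k,k-t+1)$, so that $\lfloor q/t\rfloor\le f$ and, by the definition of the Ramsey number together with the symmetry $R(a,b)=R(b,a)$, every red--blue colouring of $E(K_q)$ contains a red $K_k$ or a blue $K_{k-t+1}$, and also contains a blue $K_k$ or a red $K_{k-t+1}$.

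The construction I would take is $G=K_Q\vee G'$, the complete join of the complete graph $K_Q$ on a $q$-element set $Q$ with a graph $G'$ that is $K_{k-1}$-free, has at least $R(k,k)-q$ vertices (so that $|V(G)|\ge R(k,k)$, which is necessary), and is Ramsey for $K_{t-1}$ in a \emph{robust} sense specified below. To see $G\not\to K_k+fK_t$, colour every edge of $G$ inside $Q$ blue and every other edge red. The red graph has $Q$ as an independent set and is complete between $Q$ and $V(G')$, so each of its cliques uses at most one vertex of $Q$; hence $\omega(\mathrm{red})\le 1+\omega(G')\le k-1$, and the red graph contains no $K_k$ at all. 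The blue graph is a clique on $Q$ together with $|V(G')|$ isolated vertices, so it contains at most $\lfloor q/t\rfloor\le f$ pairwise disjoint copies of $K_t$. A monochromatic $K_k+fK_t$ would, since $t\le k$, yield $f+1$ pairwise disjoint monochromatic copies of $K_t$ (one inside its $K_k$); as the red graph has no $K_k$ and the blue graph has at most $f$ disjoint copies of $K_t$, neither colour contains $K_k+fK_t$, so this colouring witnesses $G\not\to K_k+fK_t$.

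What remains — and this is the heart of the matter — is to choose $G'$ so that $G=K_Q\vee G'\to K_k$. Given any two-colouring of $G$, restrict it to $K_Q$: if a monochromatic $K_k$ appears we are done, so by the Ramsey property quoted above we may assume $K_Q$ contains a blue $K_{k-t+1}$ on a set $Z_b$ and a red $K_{k-t+1}$ on a set $Z_r$. It then suffices to find a red $K_{t-1}$ in $G'$ all of whose vertices send only red edges to $Z_r$ — this together with $Z_r$ is a red $K_k$ — or, symmetrically, a blue $K_{t-1}$ in $G'$ sending only blue edges to $Z_b$. The adversary can spoil individual vertices of $G'$ by giving them a badly coloured edge to $Z_r$ or to $Z_b$, so what is required of $G'$ is that it still arrows $K_{t-1}$ after deletion of any such spoiled set; this is why $G'$ should be taken to be, for instance, a disjoint union of sufficiently many copies of a Folkman graph for $K_{t-1}$ (so that some copy survives intact), or handled by a pigeonhole/iteration argument locating an uncorrupted copy. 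Making this precise and verifying $K_Q\vee G'\to K_k$ is the main obstacle; it is also where the hypothesis $t\ge 3$ enters, since $Z_r$ has size $k-t+1\le k-2$ and so can be completed to a $K_k$ by a $K_{t-1}$ drawn from the $K_{k-1}$-free graph $G'$, whereas for $t=2$ no such completion inside a $K_{k-1}$-free graph is available. Granting $G\to K_k$, the remaining checks — existence of a $K_{k-1}$-free, robustly $K_{t-1}$-Ramsey graph $G'$ with enough vertices, the inequality $\lfloor q/t\rfloor\le f$, and monotonicity in $f$ — are routine, and the theorem follows.
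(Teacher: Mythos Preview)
Your setup for $(G2)$ is fine, but the step you flag as ``the main obstacle''---proving $K_Q\vee G'\to K_k$---is not merely unfinished: with $G'$ required to be $K_{k-1}$-free it is false. Take $k=4$, $t=3$, so $q=R(4,2)=4$. For \emph{any} triangle-free $G'$, colour $K_4=\{a_1,a_2,a_3,a_4\}$ with the three edges at $a_1$ red and the triangle $a_2a_3a_4$ blue; colour all of $G'$ red; make every edge from $a_1$ to $G'$ blue and every edge from $a_2,a_3,a_4$ to $G'$ red. A red clique uses at most two vertices of $G'$ and at most one of $a_2,a_3,a_4$ (they span a blue triangle, and $a_1$ has only blue edges to $G'$), so no red $K_4$. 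A blue clique uses at most one vertex of $G'$ (all $G'$-edges are red) and then only $a_1$ can join it, so no blue $K_4$. Thus $K_4\vee G'\not\to K_4$ regardless of how ``robustly'' $G'$ arrows $K_2$; your pigeonhole/many-Folkman-copies idea cannot help, because the adversary corrupts every vertex of $G'$ simultaneously (each has a blue edge into every red $K_{k-t+1}\ni a_1$ and a red edge into every blue $K_{k-t+1}\subseteq\{a_2,a_3,a_4\}$).

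The paper circumvents this by giving the non-clique part a two-level Ramsey structure and by enlarging the clique. It takes a $K_{k-1}$-free graph $G_0$ with $G_0\overset{\eps_0}{\to}K_{k-2}$, places at each vertex $j$ of $G_0$ a $K_t$-free graph $F_j$ with $F_j\overset{\eps_j}{\to}K_{t-1}$, joins $F_i$ to $F_j$ completely whenever $ij\in E(G_0)$, and joins the whole thing to $K_h$ with $h=R(k,k-t+1)+k-1$. This $G'$ is \emph{not} $K_{k-1}$-free (its clique number is $(k-2)(t-1)$), so the paper's $(G2)$ colouring differs from yours: $K_h$ and each $F_j$ are red, all cross edges blue, and the blue clique number is bounded via $\omega(G_0)\le k-2$, not via $\omega(G')$. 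For $(G1)$, an iterated Focusing Lemma yields monochromatic $K_{t-1}$'s $W_j$ in many $F_j$, a single colour $c_{ij}$ between each pair $W_i,W_j$, and a single colour $c_i$ from each vertex of $K_h$ to all $W_j$; a monochromatic $K_{k-2}$ in $G_0$ under $ij\mapsto c_{ij}$ then either gives a monochromatic $K_{(k-2)(t-1)}\supseteq K_k$ directly, or forces at least $h-(k-1)=R(k,k-t+1)$ vertices of $K_h$ to share a colour with some $W_j$, and the Ramsey number finishes. The $G_0$-layer is exactly the missing ingredient that your one-level join lacks.
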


While our proof does not apply for $t=2$, we may get an upper bound on $f(k,2)$ by taking a complete graph on $R(k,k)$ vertices. This is Ramsey for $K_k$ by definition, but is not Ramsey for $K_k + f K_2$ for any $f$ larger than $\frac{R(k,k)-k}{2}$, since for such an $f$ the graph $K_k + f K_2$ has more than $R(k,k)$ vertices. This is within roughly a factor of $4$ of the lower bound.

A graph $G$ is {\em $H$-minimal} if $G$ is $H$-Ramsey but no proper subgraph of $G$ is $H$-Ramsey. We denote the class of all $H$-minimal graphs by $\cM(H)$. Note that $G$ is $H$-Ramsey if and only if $G$ contains an $H$-minimal graph, so determining the $H$-Ramsey graphs reduces to determining the $H$-minimal graphs.  Also, two graph $H$ and $H'$ are Ramsey-equivalent if and only if $\cM(H)=\cM(H')$. 

A fundamental problem of graph Ramsey theory is to understand properties of graphs in $\cM(H)$. For example, the minimum number of vertices of a graph in $\cM(H)$ is precisely the Ramsey number $r(H)$. 
Another parameter of interest is $s(H)$, the smallest minimum degree of an $H$-minimal graph. That is, 
\[ s(H) := \min_{G \in \cM(H)} \delta(G), \]
where $\delta(G)$ is the minimum degree of $G$.

It is a simple exercise to show \cite{fl2006} that for every graph $H$, we have 
\[ 2\delta(H)-1 \leq s(H) \leq r(H)-1.\]
Somewhat surprisingly, the upper bound is far from optimal, at least for cliques. Indeed, Burr, Erd\H{o}s, and Lov\'asz \cite{burr1976} proved that $s(K_k)=(k-1)^2$. This is quite notable, as the simple upper bound mentioned above is exponential in $k$. 

Szab\'o, Zumstein, and Z\"urcher \cite{szz2010}  proved that $s(K_k \cdot K_2) \geq k-1$, where $K_k \cdot K_2$ is the graph on $k+1$ vertices which contains a $K_k$ and a vertex of degree $1$. We prove the following theorem, showing that their lower bound is sharp. 

\begin{theorem}
\label{thm:cliqueEdge} For all $k \geq 2$,
\[ s(K_k \cdot K_2) = k-1.\]
\end{theorem}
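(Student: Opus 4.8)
Since Szab\'o, Zumstein, and Z\"urcher proved that $s(K_k\cdot K_2)\ge k-1$, it suffices to establish the matching upper bound, i.e., to produce for every $k\ge 2$ a graph $G$ that is $(K_k\cdot K_2)$-minimal and contains a vertex of degree $k-1$; together with the lower bound this forces $\delta(G)=k-1$ and hence $s(K_k\cdot K_2)=k-1$. For $k=2$ one takes $G=K_{1,3}$: every $2$-colouring of its three edges repeats a colour and so contains a monochromatic $P_3=K_2\cdot K_2$ with the centre as midpoint, whereas deleting any edge leaves a $P_3$ plus an isolated vertex, which has a $2$-colouring with no monochromatic $P_3$; the leaves have degree $1$. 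So from now on assume $k\ge 3$, where positive and negative signal senders for $K_k$ are available (Burr, Erd\H{o}s, and Lov\'asz).

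The plan is to let $G$ consist of a graph $H$ together with one new vertex $x$ joined to an independent set $A=\{a_1,\dots,a_{k-1}\}\subseteq V(H)$. Then $\deg_G(x)=k-1$, and since $A$ is independent, $x$ belongs to no $K_k$ and can only ever serve as the pendant vertex of a monochromatic copy of $K_k\cdot K_2$. The core of the argument is to build, from signal senders, a graph $H$ enjoying the following ``almost-Ramsey'' property: in every $2$-colouring of $H$, either $H$ already contains a monochromatic $K_k\cdot K_2$, or some $a_i$ lies simultaneously in a monochromatic red $K_k$ and in a monochromatic blue $K_k$. Given such an $H$, the relation $G\rightarrow K_k\cdot K_2$ is immediate: in a $2$-colouring of $G$ whose restriction to $H$ has no monochromatic $K_k\cdot K_2$, the property puts some $a_i$ into a monochromatic $K_k$ of the colour $c$ of the edge $xa_i$, and that $K_k$ (which lies in $H$, so avoids $x$) together with the pendant edge $xa_i$ is a monochromatic $K_k\cdot K_2$ in $G$. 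Signal senders are the device that makes it possible to constrain the colourings of $H$ tightly enough for this; their large-distance property is used when gluing them so as not to create any new small clique or any pendant configuration inadvertently.

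Minimality of $G$ I would verify edge by edge, which asks two further things of the construction of $H$. First, for each $i$ there should exist a $(K_k\cdot K_2)$-free $2$-colouring of $H$ in which no $a_j$ with $j\ne i$ lies in both a monochromatic red $K_k$ and a monochromatic blue $K_k$; then in $G-xa_i$ the vertex $x$ has degree $k-2$, can still only be a pendant, and colouring each surviving edge $xa_j$ in a colour in which $a_j$ lies in no monochromatic $K_k$ produces a colouring of $G-xa_i$ with no monochromatic $K_k\cdot K_2$. Second, for each edge $e$ of $H$ there should exist a $(K_k\cdot K_2)$-free $2$-colouring of $H-e$ in which no $a_j$ at all lies in both a monochromatic red $K_k$ and a monochromatic blue $K_k$; extending it to $x$ exactly as above witnesses $G-e\not\rightarrow K_k\cdot K_2$. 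Since $\deg_G(x)=k-1$, this gives $\delta(G)\le k-1$, which is what we wanted.

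The hard part is the construction of $H$. The almost-Ramsey property needed for the forward direction and the ``deletion kills the forcing'' properties needed for minimality pull against one another, and reconciling them will require a careful chaining of positive and negative signal senders so that, in every colouring that is $K_k$-free on all of the senders, the cliques through $a_1,\dots,a_{k-1}$ are pushed into a configuration where neither alternative of the property can be avoided, while the deletion of any single edge destroys this forcing. A further delicate point is the treatment of those colourings in which some individual signal sender fails to be $K_k$-free --- precisely the colourings in which the ``some $a_i$ in a monochromatic $K_k$'' conclusion could break down --- where one must arrange the auxiliary structure so that every such colouring already contains a monochromatic $K_k\cdot K_2$. With $H$ in hand, the forward implication and the edge-by-edge check of minimality are routine.
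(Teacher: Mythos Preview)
Your $k=2$ case is fine, but for $k\ge 3$ what you have written is a plan, not a proof: the construction of $H$ is the entire content of the argument, and you have only listed the properties $H$ must have, not shown how to achieve them. Two points deserve emphasis.

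First, you are working harder than necessary. There is no need for $G$ itself to be Ramsey-minimal. It suffices to build $G$ containing a vertex $v$ of degree $k-1$ such that $G\rightarrow K_k\cdot K_2$ but $G-v\nrightarrow K_k\cdot K_2$: any $(K_k\cdot K_2)$-minimal subgraph $G'\subseteq G$ must then contain $v$, whence $\delta(G')\le\deg_{G'}(v)\le\deg_G(v)=k-1$. This is what the paper does, and it eliminates the two ``deletion'' requirements you placed on $H$, which you yourself note pull against the forward direction.

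Second, and more seriously, signal senders for $K_k$ constrain only the $K_k$-free colourings, whereas the class of $(K_k\cdot K_2)$-free colourings is strictly larger. You acknowledge this, but the fix you sketch (``arrange the auxiliary structure so that every such colouring already contains a monochromatic $K_k\cdot K_2$'') is precisely the missing idea; it is not at all clear how to force every monochromatic $K_k$ appearing inside a sender to extend to a monochromatic $K_k\cdot K_2$. The paper avoids senders altogether. Its central gadget $G_0$ contains a distinguished copy $H\cong K_k$ and has the property that in \emph{every} $(K_k\cdot K_2)$-free colouring of $G_0$, this particular $H$ is forced to be monochromatic. The gadget is built from $k-2$ copies of a $K_k$-free graph $F$ satisfying $F\overset{\eps}{\rightarrow}K_{k-1}$ (obtained by planting the Ne\v set\v ril--R\"odl graph into the hyperedges of a high-girth hypergraph with small independence number), joined completely to each other and to $H$; a colour-focusing argument then pins down the colour of $H$. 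For the final graph one takes $k-1$ disjoint copies of $G_0$, picks one vertex $v_i$ from each forced clique $H_i$, makes $v_1,\dots,v_{k-1}$ into a $K_{k-1}$, adds one extra edge, and attaches the new vertex $v$ to $v_1,\dots,v_{k-1}$. The verification that $G\rightarrow K_k\cdot K_2$ is short, and an explicit colouring witnesses $G-v\nrightarrow K_k\cdot K_2$.
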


Note that Theorem \ref{thm:cliqueEdge} implies that $K_k$ and $K_k \cdot K_2$ are not Ramsey-equivalent. Indeed, for $k=2$ this is trivial, and for $k \geq 3$ we have $(k-1)^2 = s(K_k) > s(K_k \cdot K_2)=k-1$. Hence, Theorem \ref{equivmustdis} is a corollary of Theorem \ref{thm:cliqueEdge}. 

\noindent {\bf Organization:}\, In the next section, we prove Theorem \ref{thm:cliqueEdge}, showing that $s(K_k \cdot K_2)=k-1$; this implies Theorem \ref{equivmustdis}. In Section \ref{sect:additionalcliques}, we prove Theorem \ref{THM:DisjointCliques} giving an upper bound on the maximum number $f=f(k,t)$ such that $K_k$ is Ramsey-equivalent to $K_k+f\cdot K_t$. The final section contains relevant open problems of interest. 

\noindent {\bf Conventions and Notation:}\, All colourings are red-blue edge-colourings, unless otherwise specified. For a graph $G$, we write $V(G)$ for the vertex set of $G$ and $v(G)$ for the number of vertices of $G$. 

\section{Hanging edges}\label{sect:hangingedges}
In this section, we study the minimum degrees of graphs that are $K_k \cdot K_2$-minimal. Our plan is to construct a graph $G$ that contains a vertex $v$ of degree $k-1$ which is ``crucial'' for $G$ to be $K_k \cdot K_2$-Ramsey. That is, $G \rightarrow K_k \cdot K_2$, but $G-v \nrightarrow K_k \cdot K_2$. Thus, any minimal $K_k \cdot K_2$-Ramsey subgraph $G' \subseteq G$ has to contain $v$ and hence have minimum degree at most $k-1$. We therefore obtain the upper bound for Theorem \ref{thm:cliqueEdge}. 

We now proceed to develop tools useful for proving Theorem \ref{thm:cliqueEdge}. The following theorem of Ne\v set\v ril and R\"odl \cite{nesetril1976} states that there is a $K_k$-free graph $F$ so that any two-colouring of the edges of $F$ has a monochromatic $K_{k-1}$.

\begin{theorem}\label{ramseyGadget}
For every $k \geq 2$ there is some graph $F$ so that $F$ is $K_k$-free and $F \rightarrow K_{k-1}$.
\end{theorem}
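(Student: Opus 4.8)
The plan is to obtain $F$ directly from the Ne\v set\v ril--R\"odl theorem, which guarantees that for every graph $H$ there is a graph $G$ with the same clique number as $H$ satisfying $G \rightarrow H$. First I would apply this with $H = K_{k-1}$: since $\omega(K_{k-1}) = k-1$, we obtain a graph $F$ with $\omega(F) = k-1$ such that every two-colouring of $E(F)$ contains a monochromatic $K_{k-1}$. That $\omega(F) = k-1$ is exactly the assertion that $F$ contains no $K_k$, so $F$ is $K_k$-free, and the Ramsey property $F \rightarrow K_{k-1}$ is immediate. This settles the statement for every $k \ge 3$.

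The only remaining case is $k = 2$, where the claim is that there is a $K_2$-free graph $F$ with $F \rightarrow K_1$. Here $F \rightarrow K_1$ holds for any graph with at least one vertex, since a single vertex is a monochromatic copy of $K_1$ in any colouring (indeed $K_1$ has no edges to colour), and $K_2$-free simply means $F$ has no edges. So taking $F$ to be a single isolated vertex works. One should double-check the boundary convention that $\omega$ of an edgeless graph is $1$, so that the $k=2$ instance is also covered uniformly by the Ne\v set\v ril--R\"odl statement applied to $K_1$.

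There is essentially no obstacle here: the theorem is a verbatim specialization of Ne\v set\v ril--R\"odl to the target graph $K_{k-1}$, recorded separately only because it is the precise ``gadget'' form needed later in the construction of the graph $G$ witnessing $s(K_k \cdot K_2) \le k-1$. The one point requiring minimal care is the translation between ``clique number $k-1$'' and ``$K_k$-free,'' which is immediate, and the degenerate $k=2$ case, which is handled by inspection as above.
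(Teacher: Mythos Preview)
Your proposal is correct and matches the paper's approach exactly: the paper does not prove Theorem~\ref{ramseyGadget} independently but simply attributes it to Ne\v set\v ril and R\"odl \cite{nesetril1976}, and your derivation is precisely the specialization of their clique-preserving Ramsey theorem (quoted in the introduction) to $H=K_{k-1}$. Your handling of the trivial $k=2$ case is also fine.
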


By a {\em circuit of length $s$} in a hypergraph $\cH =(V,\cE )$ we mean a sequence $e_1,v_1,e_2,v_2,\ldots,e_s,v_s$ of distinct edges 
$e_1,\ldots,e_s\in \cE$ and distinct vertices $v_1,\ldots,v_s\in V$ such that $v_j \in e_j \cap e_{j+1}$ for all $1\leq j <s$, and $v_s \in e_s \cap e_1$. In particular, if two distinct hyperedges intersect in two or more vertices, we consider this as a circuit of length 2. By the {\em girth} of a hypergraph $\cH$ we denote the length of the shortest circuit in $\cH$. The following lemma is proved in \cite{eh1966} by a now standard application of the probabilistic method \cite{AS00}.

\begin{lemma}\label{girthindep}
For all integers $k,m \geq 2$ and every $\epsilon > 0$ there is a $k$-uniform hypergraph of girth at least $m$ and independence number at most $\epsilon n$, where $n$ is the number of vertices in the hypergraph.
\end{lemma}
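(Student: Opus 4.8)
The plan is to use the probabilistic deletion method, following the standard argument for graphs of high girth and high chromatic number but adapted to the $k$-uniform setting. First I would fix $n$ large (to be determined) and form a random $k$-uniform hypergraph $\cH_0$ on vertex set $[n]$ by including each of the $\binom{n}{k}$ possible edges independently with probability $p = n^{-k + 1 + 1/(2m)}$ (or some similar power chosen so that short circuits are rare but edges are still plentiful). Then I would estimate two things: the expected number of ``short'' circuits, i.e. circuits of length at most $m-1$, and a lower bound on $|\cE(\cH_0)|$, and a lower bound on the size of the largest independent set that we need to kill.

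The key computation is bounding the expected number $X$ of circuits of length $\ell$ for each $2 \le \ell \le m-1$. A circuit of length $\ell$ uses $\ell$ distinct edges and at most $\ell(k-1)$ vertices (each edge contributes at most $k-1$ new vertices beyond the shared ones), so the number of potential such configurations is $O(n^{\ell(k-1)})$, and each occurs with probability $p^\ell$. Hence $\mathbb{E}[X] = O\!\left(\sum_{\ell=2}^{m-1} n^{\ell(k-1)} p^\ell\right) = O\!\left(\sum_{\ell} n^{\ell(k-1)} n^{\ell(-k+1+1/(2m))}\right) = O\!\left(\sum_\ell n^{\ell/(2m)}\right) = o(n)$, since $\ell/(2m) < 1/2$ for $\ell \le m-1$, provided $n$ is large. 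Meanwhile $\mathbb{E}[|\cE(\cH_0)|] = \binom{n}{k} p = \Theta(n^{1 + 1/(2m)})$, which dwarfs $n$, so by Markov and concentration (or just linearity of expectation plus deletion) we can delete one vertex from each short circuit and still retain a hypergraph $\cH$ on at least $n/2$ vertices with no circuit of length less than $m$, i.e. girth at least $m$.

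For the independence number, I would invoke a standard first-moment bound: the probability that a fixed set $S$ of size $a = \lceil \eps n /4 \rceil$ (say) is independent in $\cH_0$ is at most $(1-p)^{\binom{a}{k}} \le \exp(-p \binom{a}{k})$, and $p \binom{a}{k} = \Theta(n^{1+1/(2m)})$ grows faster than $\log \binom{n}{a} = O(n)$, so with high probability $\cH_0$ — and hence its subhypergraph $\cH$ — has no independent set of size $\eps n/4$; since $\cH$ has at least $n/2$ vertices, its independence number is at most $\eps n/4 \le \eps (n/2)/2 < \eps \cdot v(\cH)$ once we relabel $n' = v(\cH) \ge n/2$ and check the constant. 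A small amount of care is needed so that the bound comes out as ``at most $\eps$ times the number of vertices of the final hypergraph''; choosing the deletion to remove at most half the vertices and the threshold $a$ with an extra factor makes this clean.

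The main obstacle — really the only delicate point — is choosing the exponent in $p$ so that both estimates work simultaneously: $p$ must be small enough that $\mathbb{E}[X] = o(n)$ (which forces $p \ll n^{-k+1+1/(2m)}$-ish, the binding constraint coming from $\ell = m-1$), yet large enough that $p\binom{a}{k} \gg n$ so the independence-number bound beats the union bound over $\binom{n}{a}$ sets. The choice $p = n^{-k+1+1/(2m)}$ threads this needle, and after deleting $o(n)$ vertices the resulting hypergraph has the required girth and independence number. I would present this as a short, self-contained paragraph citing \cite{eh1966} and \cite{AS00} for the method, since the computation is entirely routine.
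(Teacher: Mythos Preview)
Your proposal is correct and follows precisely the ``now standard application of the probabilistic method'' that the paper alludes to (the paper does not actually give a proof, only the citation to \cite{eh1966} and \cite{AS00}). The random $k$-uniform hypergraph with edge probability $p = n^{-k+1+1/(2m)}$, the first-moment bound on short circuits via the $n^{\ell(k-1)}p^\ell$ count, deletion of $o(n)$ vertices, and the union bound on independent sets are exactly the intended ingredients, and your arithmetic threading the exponent is fine.
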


We will need a strengthening of Theorem \ref{ramseyGadget} which states that there is a $K_k$-free graph $F$ so that any two-colouring of the edges of $F$ has a monochromatic $K_{k-1}$ inside of every $\eps$ fraction of the vertices.

\begin{definition}
We write $F \overset{\eps}{\rightarrow} K_{k}$ to mean that for every $S \subseteq V(F)$, $|S| \geq \eps v(F)$ implies $F[S] \rightarrow K_{k}$.
\end{definition}

\begin{lemma} \label{strongRamseyGadget}
For every $\eps >0$ and $k \geq 2$ there exists a graph $F$ which is $K_{k}$-free and $F \overset{\eps}{\rightarrow} K_{k-1}$.
\end{lemma}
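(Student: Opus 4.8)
The plan is to bootstrap Lemma~\ref{strongRamseyGadget} from the plain Ramsey gadget of Theorem~\ref{ramseyGadget} together with the hypergraph of Lemma~\ref{girthindep}, using a substitution (``blow-up along a hypergraph'') construction. Fix $k$ and $\eps>0$. Let $F_0$ be a $K_k$-free graph with $F_0\rightarrow K_{k-1}$ from Theorem~\ref{ramseyGadget}, say on vertex set $\{u_1,\dots,u_r\}$. The idea is to take a $K_k$-free ``host'' in which each vertex is replaced by a large independent set, arranged so that many disjoint copies of $F_0$ sit inside, densely enough that every $\eps$-fraction of the vertices fully contains one of these copies.

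Concretely, I would apply Lemma~\ref{girthindep} with uniformity $k$, girth $m$ large (girth $\geq r+1$ suffices, so that no copy of $F_0$ can ``wrap around'' a short circuit), and independence number at most $\eps'n$ for a suitable $\eps'$; call the resulting $k$-uniform hypergraph $\cH$ on vertex set $V$, $|V|=n$. Build $F$ on the same vertex set $V$: for each hyperedge $e=\{w_1,\dots,w_k\}\in\cE(\cH)$ (with some fixed ordering of its vertices), and for each edge $u_iu_j\in E(F_0)$, we would like to put an edge between $w_i$ and $w_j$ — but since $F_0$ has $r$ vertices and $e$ has only $k$, this does not literally typecheck. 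The cleaner route is to instead choose $\cH$ to be $r$-uniform (apply Lemma~\ref{girthindep} with uniformity $r$), and for each $r$-edge $e=\{w_1,\dots,w_r\}$ put a copy of $F_0$ on $e$ by adding $w_iw_j$ whenever $u_iu_j\in E(F_0)$. High girth ($\geq 3$, i.e.\ linearity, already) guarantees any two of these planted copies share at most one vertex, and girth $\geq r+1$ guarantees that the planted copies are exactly the copies of $K_{k-1}$-producing structure we rely on; since each planted copy is $K_k$-free and copies overlap in at most one vertex along a high-girth pattern, a short combinatorial argument shows $F$ itself is $K_k$-free (a $K_k$ in $F$ would have to use $\binom{k}{2}$ edges, and these cannot all come from distinct planted $F_0$'s without creating a short circuit in $\cH$, nor from a single planted $F_0$ since $F_0$ is $K_k$-free).

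For the Ramsey property $F\overset{\eps}{\rightarrow}K_{k-1}$: given $S\subseteq V$ with $|S|\geq\eps n$, I want a hyperedge $e\in\cE(\cH)$ with $e\subseteq S$; then $F[S]$ contains the planted copy of $F_0$ on $e$, so any two-colouring of $F[S]$ restricted to that copy yields a monochromatic $K_{k-1}$, as desired. The existence of such an $e$ is exactly the statement that $V\setminus S$ is not a vertex cover of $\cH$, equivalently that $S$ contains no independent set complement — wait, more precisely, if every hyperedge met $V\setminus S$ then $V\setminus S$ would be a transversal, so $S$ would be contained in the complement of a transversal, which is an independent set; since $|S|\geq\eps n$ we need the independence number of $\cH$ to be less than $\eps n$, which is guaranteed by Lemma~\ref{girthindep} upon choosing $\eps'<\eps$. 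This pins down the parameters.

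The main obstacle I anticipate is the verification that the blow-up $F$ is genuinely $K_k$-free: one must rule out a clique $K_k$ whose edges are distributed across several planted copies of $F_0$ glued along shared vertices. This is where the high-girth hypothesis does the work — any such spread-out clique forces a short circuit in $\cH$ — and getting the bookkeeping exactly right (how large must the girth be as a function of $r$, and why overlaps of size $\geq 2$ are excluded by linearity) is the delicate point. A secondary subtlety is ensuring that the planted copies of $F_0$ are vertex-disjoint enough that the Ramsey argument inside a single hyperedge is not disturbed by extra edges of $F$ crossing into it; linearity of $\cH$ again handles this, since any other planted copy meets a fixed hyperedge in at most one vertex and hence contributes no edges within it.
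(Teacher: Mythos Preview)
Your proposal is correct and is exactly the paper's construction: a $v(F_0)$-uniform hypergraph $\cH$ of high girth and independence number below $\eps n$, with a copy of $F_0$ planted on each hyperedge. The paper streamlines your bookkeeping by observing that girth at least~$4$ already suffices --- then every triangle of $F$, and hence every $K_k$, lies inside a single hyperedge (which induces precisely a copy of the $K_k$-free $F_0$) --- and your ``secondary subtlety'' is a non-issue, since extra edges within a hyperedge could only help the relation $F[S]\rightarrow K_{k-1}$.
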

\begin{proof}
The case where $k=2$ is trivial, so we will assume that $k \geq 3$. Take $F_0$ to be as in Theorem \ref{ramseyGadget}. By Lemma \ref{girthindep} there is some $v(F_0)$-uniform hypergraph $\mathcal{H}=(V,\mathcal{E})$ of girth at least $4$ and independence number less than $\eps \size{V}$. We construct a graph $F$ on vertex set $V$. The edges of $F$ are created by placing a copy of $F_0$ inside of each hyperedge in $\mathcal{E}$.

Since $\mathcal{H}$ has girth at least $4$, any triangle of $F$ must be contained in a single hyperedge of $\mathcal{H}$. Therefore, the vertex set of any copy of $K_{k}$ in $F$ must be contained in a single hyperedge of $\mathcal{H}$ as well. However, a single hyperedge forms just a copy of $F_0$ in $F$ and $F_0$ has no copy of $K_k$, so $F$ has no copy of $K_{k}$.

Since $\mathcal{H}$ has independence number less than $\eps \size{V}$, any set $S$ of at least $\eps \size{V}$ vertices must contain some hyperedge. Hence, $F[S]$ contains a copy of $F_0$. As $F_0 \rightarrow K_{k-1}$, we also have $F[S] \rightarrow K_{k-1}$.
\end{proof}

From this $F$ we construct a gadget graph $G_0$ with a useful property, namely that a particular copy of $K_k$ is forced to be monochromatic.
\begin{lemma} \label{ourGadget1}
There exists a graph $G_0$ with a subgraph $H$ isomorphic to $K_k$ contained in $G_0$ such that 
\begin{enumerate} 
	\item there is a colouring of $G_0$ without a red $K_k \cdot K_2$ and without a blue $K_k$ and 
	\item every colouring of $G_0$ without a monochromatic copy of $K_k \cdot K_2$ 
		results in $H$ being monochromatic.
\end{enumerate}
\end{lemma}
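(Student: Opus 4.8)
The plan is to build $G_0$ from the "strong gadget" $F$ of Lemma \ref{strongRamseyGadget} by attaching, to a fresh copy $H$ of $K_k$, enough pendant structure that a colouring avoiding a monochromatic $K_k \cdot K_2$ is forced to make $H$ monochromatic. First I would take $F$ with parameter $\eps = 1/2$ (any constant less than $1$ works), so $F$ is $K_k$-free and $F \overset{1/2}{\rightarrow} K_{k-1}$. Now form $G_0$ as follows: take the clique $H \cong K_k$ on vertices $w_1,\dots,w_k$, take two vertex-disjoint copies $F^{(1)}, F^{(2)}$ of $F$, and join \emph{every} vertex of $F^{(i)}$ to every vertex of $H$, for $i=1,2$. (Using two copies of $F$ — or equivalently one copy of $F$ together with a second copy, or a single copy with twice the vertex budget — is what lets us handle the colour of the $H$-to-$F$ edges regardless of which colour class dominates.) The key point is that $F$ is $K_k$-free, so $G_0$ has no $K_{k+1}$, and any copy of $K_k$ in $G_0$ other than inside a single $F^{(i)}\cup H$ interaction is controlled; one checks $G_0$ itself is set up so that $H$ is the only "dangerous" clique.

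The heart of the argument is part (2). Suppose we have a colouring of $G_0$ with no monochromatic $K_k \cdot K_2$; I want to show $H$ is monochromatic. Consider the edges between $H$ and $F^{(1)}$: since each vertex $v \in V(F^{(1)})$ sends $k$ edges into $H$, by pigeonhole at least half the vertices of $F^{(1)}$ send a majority colour, say red; call this set $S$, so $|S| \ge \tfrac12 v(F^{(1)})$ and every $v\in S$ has at least $\lceil k/2\rceil$ red edges to $H$. Actually, the cleaner route: for each vertex $v\in V(F^{(1)})$, look at the colouring of the $k$ edges from $v$ to $H$; there is a colour, say $c(v)$, appearing on a set of at least $k/2$ of the $w_j$'s. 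Partition $V(F^{(1)})$ into $V_{\mathrm{red}}\cup V_{\mathrm{blue}}$ by $c(v)$; one part, WLOG $V_{\mathrm{red}}$, has size $\ge \tfrac12 v(F^{(1)}) = \eps\, v(F)$, so $F^{(1)}[V_{\mathrm{red}}] \rightarrow K_{k-1}$, giving a monochromatic $K_{k-1}$ on some $T \subseteq V_{\mathrm{red}}$. If that $K_{k-1}$ is red: every vertex of $T$ has $\ge k/2$ red neighbours in $H$, and since $|T| = k-1 \ge k/2 \ge \lceil k/2 \rceil$ for $k \ge 2$... hmm, this needs $k-1$ vertices of $T$ to have a common red neighbour $w_j \in H$, which follows if $(k-1)\cdot\lceil k/2\rceil > (k-1)(k - 1)$ fails — so instead I'd simply arrange that each $v$ has \emph{all} $k$ edges to $H$ in colour $c(v)$ is too strong; the right fix is to take $F^{(1)}$'s edges to $H$ and note a red $K_{k-1}$ in $T$ plus \emph{one} common red neighbour in $H$ yields a red $K_k$, and then \emph{any} further red edge at a vertex of that $K_k$ — in particular an edge to $F^{(2)}$ or within $H$ — completes a red $K_k\cdot K_2$. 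So the real content is: a monochromatic $K_{k-1}$ in $F^{(1)}$, say red, forces all $H$-edges at $T$ to... no. Let me restate the intended clean deduction: from the monochromatic $K_{k-1}$ on $T$ in $F^{(i)}$ of colour $c$, together with the $T$–$H$ edges, we either find a monochromatic $K_k \cdot K_2$ outright (contradiction) or deduce that the $T$–$H$ edges and hence the relevant edges of $H$ are all of the \emph{opposite} colour $\bar c$; doing this on both $F^{(1)}$ and $F^{(2)}$, with the two copies absorbing the two possible colours, forces all of $H$ to be a single colour, while part (1)'s colouring shows this is consistent.

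The main obstacle, and where I'd spend the most care, is getting the quantitative interaction between the $K_{k-1}$ found in $F^{(i)}$ and the edges to $H$ exactly right — specifically, ensuring that "a monochromatic $K_{k-1}$ in $F^{(i)}$ of colour $c$" really does propagate to force \emph{every} edge of $H$ to be colour $\bar c$ (not just some), because only then does combining the two copies pin down all of $H$. The likely mechanism: if some $w_j \in H$ had a red edge to two vertices of a red $K_{k-1}$... one red $K_{k-1}$ in $T\subseteq F^{(1)}$ plus a single red edge from $T$ to $w_j$ plus a single red edge from $w_j$ to $w_\ell$ is already a red $K_k\cdot K_2$ only if the $K_{k-1}$, $w_j$ form a $K_k$, i.e.\ $w_j$ is red-joined to \emph{all} of $T$. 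So I would instead use a cleaner gadget: replace "$F \overset\eps\rightarrow K_{k-1}$" usage by finding inside $F^{(i)}$ a monochromatic $K_k$-minus... — concretely, I expect the actual proof first uses the gadget to force a monochromatic $K_{k-1}$ that is \emph{completely joined} to a chosen vertex of $H$, by building $G_0$ so that $H$'s vertices each play the role of the apex, and iterating the argument $k$ times (once per vertex of $H$, with a private pair of $F$-copies each). Managing that bookkeeping — $k$ private gadgets, each forcing one vertex of $H$ to agree in colour with its neighbours — and then stitching the conclusions into "$H$ monochromatic," while simultaneously exhibiting in part (1) an explicit colouring (colour $H$ and all $H$–$F$ edges blue, colour each $F^{(i)}$ internally by a $K_{k-1}$-free-of-monochromatic... no, $F\to K_{k-1}$, so internally $F^{(i)}$ must contain a monochromatic $K_{k-1}$; but it is $K_k$-free, so no monochromatic $K_k$, hence no red $K_k\cdot K_2$ with the edge inside $F$; the blue $K_k$ is exactly $H$, but we need \emph{no blue $K_k$} in part (1) — so in part (1) colour $H$ red and check no red $K_k \cdot K_2$: $H$ is a red $K_k$, so we must ensure no red edge leaves $H$, i.e.\ colour all $H$–$F$ edges blue and all of each $F^{(i)}$ blue except... $F^{(i)}$ internally still forces a monochromatic $K_{k-1}$, which is fine as $K_{k-1}\ne K_k\cdot K_2$) — is the delicate part. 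I would finalize part (1) by the colouring: all edges incident to $H$ or inside $H$ red is wrong; rather, make $H$ the unique red $K_k$ and kill its extension by making every edge from $H$ to $F^{(1)}\cup F^{(2)}$ blue, and colour each $F^{(i)}$ with \emph{any} colouring — it has a monochromatic $K_{k-1}$ but no monochromatic $K_k$ and the blue $H$–$F$ edges together with a blue $K_{k-1}$ in some $F^{(i)}$ could form a blue $K_k$ and then a blue $K_k\cdot K_2$, so one must instead colour $F^{(i)}$ so its monochromatic $K_{k-1}$ is red while keeping blue-cliques small — which is possible since $F^{(i)}$ is only required to be $\overset\eps\rightarrow K_{k-1}$, not balanced. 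Thus I expect part (1) to reduce to a short explicit check and part (2) to be the substantive combinatorial core described above.
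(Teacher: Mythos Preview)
Your construction with two disjoint copies of $F$ attached only to $H$ cannot force $H$ to be monochromatic, and this is a genuine gap, not just missing bookkeeping. Suppose you run the focusing argument correctly on $F^{(1)}$: you find $S_1 \subseteq V(F^{(1)})$ such that each $w_j \in V(H)$ is monochromatic to $S_1$, and inside $S_1$ a monochromatic (say red) $K_{k-1}$ on $T$. As you say, if some $w_j$ were red to $S_1$ then $w_j \cup T$ plus one more vertex of $S_1$ gives a red $K_k\cdot K_2$; hence every $w_j$ is blue to $T$. But now a blue edge $w_iw_j$ inside $H$ does \emph{not} extend to a blue $K_k$: the only blue edges you control go from $\{w_i,w_j\}$ into $T$, and $T$ is red internally, so you get at most a blue triangle $w_i,w_j,t$. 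Running the same argument on $F^{(2)}$ gives you a second $(k-1)$-clique $T'$, but there are no edges between $T$ and $T'$ in your graph, so you still cannot assemble a blue $K_k$. Nothing about $H$ is forced.

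The paper's construction fixes exactly this: take $k-2$ copies $F_1,\ldots,F_{k-2}$ of $F$ (with $\eps = 2^{-k^2}$), put complete bipartite graphs between every pair $F_i,F_j$, and join $H$ to all of them. The Focusing Lemma (not a majority pigeonhole---you need each $w_j$ to be monochromatic to \emph{all} of a large set, not to a majority) is applied iteratively: in $F_1$ you get a red $K_{k-1}$ called $H_1$ with all $H$--$H_1$ edges blue; then in $F_2$ you focus with respect to $V(H)\cup V(H_1)$, find a monochromatic $K_{k-1}$, and the cross-edges to $H_1$ force it to be red as well, with blue edges to both $H$ and $H_1$; and so on. After $k-2$ rounds you have red cliques $H_1,\ldots,H_{k-2}$ pairwise blue-joined and blue-joined to $H$. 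Now a single blue edge in $H$ together with one vertex from each $H_i$ gives a blue $K_k$, and one more vertex from any $H_i$ supplies the pendant. Hence $H$ is all red. For part (1), colour each $F_i$ and $H$ red internally and all cross-edges blue: the only red $K_k$ is $H$ (each $F_i$ is $K_k$-free) and it has no red edge leaving it, while the blue graph is $(k-1)$-partite with parts $V(F_1),\ldots,V(F_{k-2}),V(H)$ and so contains no $K_k$. Your attempts at part (1) struggled precisely because you were trying to control the internal colouring of $F$; the point is you do not need to---colouring $F_i$ entirely red is fine since it is $K_k$-free.
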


In order to prove that $H$ must be monochromatic in the above lemma, we will employ a technique we call colour focusing.

\begin{lemma} (Focusing Lemma)
Let $G=(A\cup B, E)$ be a complete bipartite graph with a colouring 
$\chi : E \rightarrow \{ red, blue\}$ of its edges. Then there exist subsets $A'\subseteq A$ and  $B' \subseteq B$ with $|A'| \geq |A|/2$ and $|B'| \geq |B|/2^{|A|}$, such that  
\begin{itemize}
\item[(a)] for every vertex $a \in A$, the set of edges from $a$ to $B'$ is monochromatic, and
\item[(b)] $\chi$ is constant on the edges between $A'$ and $B'$.
\end{itemize}
\end{lemma}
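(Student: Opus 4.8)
The plan is a two-step pigeonhole argument on the ``colour patterns'' that the vertices of $B$ see on $A$.

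\emph{Step 1 (thinning $B$ to get (a)).} For each vertex $b \in B$, record the function $\phi_b : A \to \{red, blue\}$ given by $\phi_b(a) = \chi(ab)$. There are at most $2^{|A|}$ such functions, so by pigeonhole there is a single function $\phi : A \to \{red,blue\}$ and a set $B' \subseteq B$ with $|B'| \geq |B|/2^{|A|}$ such that $\phi_b = \phi$ for every $b \in B'$. By construction, for every $a \in A$ all edges from $a$ to $B'$ receive the colour $\phi(a)$; in particular this set of edges is monochromatic, which is exactly property (a).

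\emph{Step 2 (thinning $A$ to get (b)).} The function $\phi$ partitions $A$ into $\phi^{-1}(red)$ and $\phi^{-1}(blue)$, and one of these two parts has size at least $|A|/2$; call it $A'$ and let $c$ be the corresponding colour. Then every edge between $A'$ and $B'$ has an endpoint $a \in A'$ with $\phi(a) = c$, so by Step 1 it has colour $c$. Hence $\chi$ is constant (equal to $c$) on all edges between $A'$ and $B'$, giving property (b), while $|A'| \geq |A|/2$ and $|B'| \geq |B|/2^{|A|}$ as required.

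There is essentially no obstacle here; the only point that needs care is the \emph{order} of the two thinning steps. One must first pass to $B'$, so that each vertex $a \in A$ acquires a well-defined colour on its edges to $B'$, and only afterwards restrict to a monochromatic half $A'$ of $A$. Performing the steps in the opposite order would not guarantee property (a) for all of $A$.
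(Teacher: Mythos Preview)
Your proof is correct and follows exactly the same approach as the paper's own argument: first pigeonhole on the $2^{|A|}$ possible colour patterns of vertices in $B$ to obtain $B'$, then take the majority colour class in $A$ to obtain $A'$. Your remark about the necessary order of the two steps is a helpful clarification but not a departure from the paper's method.
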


\begin{proof}
Define, for some vertex $b \in B$, the colour pattern ${\bf c}_b$ of $b$ to be the function with domain $A$ that maps a vertex $a\in A$ to the colour of the edge $\{a,b\}$. Consider the most common of the $2^{|A|}$ possible colour patterns the vertices 
in $B$ might have towards $A$ and call it ${\bf c}$. We define $B' \subseteq B$ 
to be the set of vertices having colour pattern ${\bf c}$. By the pigeonhole principle 
$|B'| \geq |B|/2^{|A|}$. Now, each vertex $a$ in $A$ has only edges of colour ${\bf c}(a)$ to $B'$, which proves part (a).
Then, each vertex in $A$ has either only red or only
blue edges to $B'$. Therefore, for some colour $c \in \{ red, blue\}$, at least
half of the vertices in $A$ have only edges of colour $c$ towards $B'$. This is the set we 
choose to be $A'$, concluding the proof of part (b).
\end{proof}

We now use part (a) to prove Lemma \ref{ourGadget1}.

{\bf Proof of Lemma \ref{ourGadget1}}.
If $k=2$ then taking $G_0$ to be a single edge suffices. We will henceforth assume $k \geq 3$. Take $\eps = 2^{-k^2}$ and let $F_1,\ldots,F_{k-2}$ be copies of the graph $F$ from Lemma \ref{strongRamseyGadget}. Add complete bipartite graphs between any two of these copies. Add a copy $H$ of $K_k$ and connect it to every vertex in every $F_i$. The resulting graph is $G_0$ (see Figure \ref{fig:Picture0}). To show $G_0 \nrightarrow K_k \cdot K_2$, colour all edges inside every $F_i$ and inside $H$ red, and all the remaining edges blue. The largest red clique is $H$, with only blue edges leaving $H$. 
The $F_i$ are $K_k$-free, and any edge leaving $F_i$ is blue as well. 
Since the graph of blue edges is $(k-1)$-chromatic ($F_1,\dots,F_{k-2},H$ is a partition into independent sets), 
the largest blue clique has order $k-1$. This verifies (1).

\begin{figure} [tbp]
 \centering
\phantom{asdfasdfasdfaasdfasfasdfad}
 \includegraphics[width=0.36\textwidth]{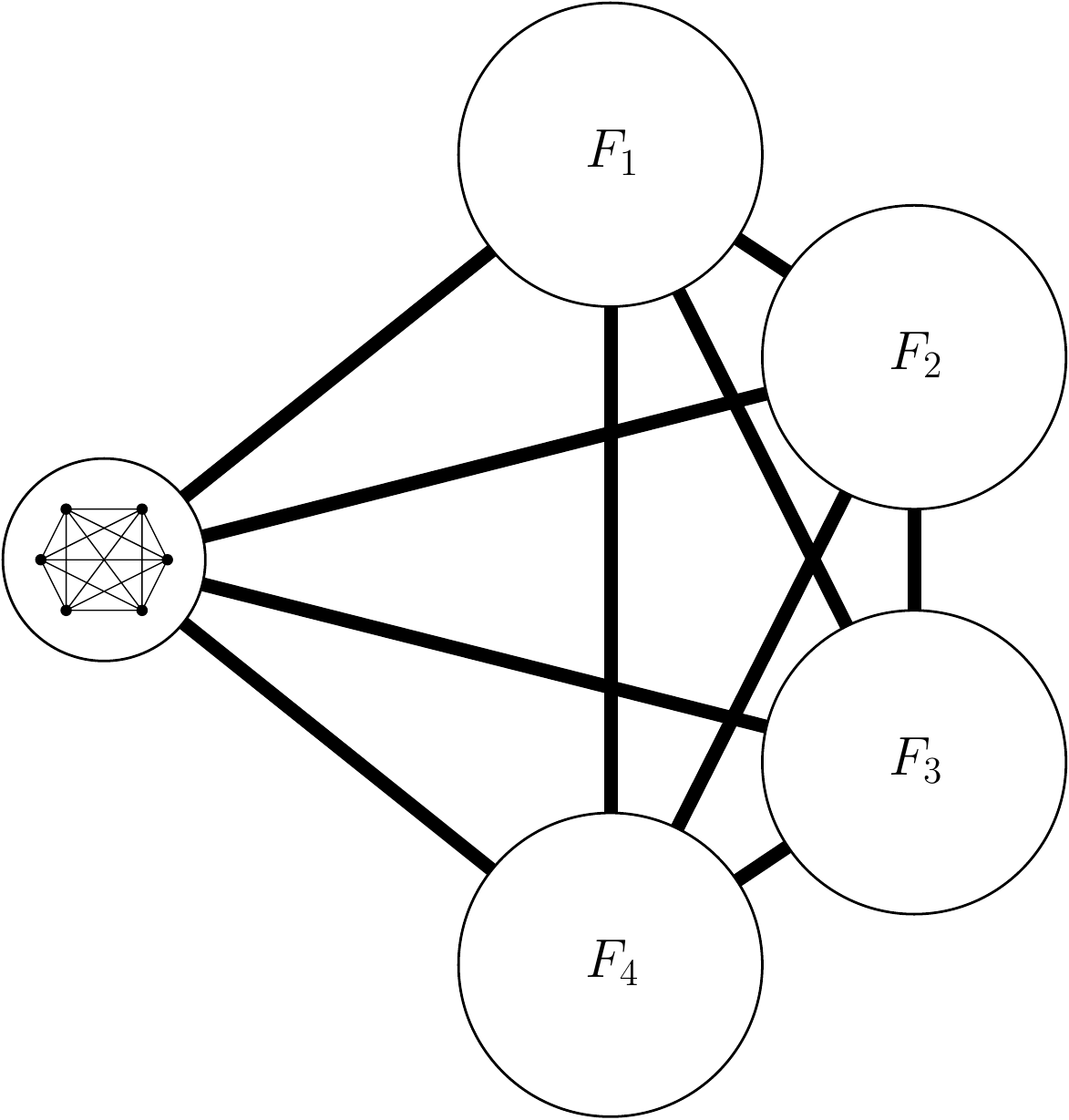}
 \caption{The gadget graph $G_0$ in Lemma \ref{ourGadget1} for $k = 6$. A thick line indicates that the vertices of the corresponding sets are pairwise connected.}
 \label{fig:Picture0}
 \end{figure}

 \begin{figure}[b]
 \centering
\phantom{asdfasdfasdfaasdfasfasdfad}
\begin{subfigure}[b]{0.45\textwidth}
	\centering
	 \includegraphics[width=0.6\textwidth]
	 {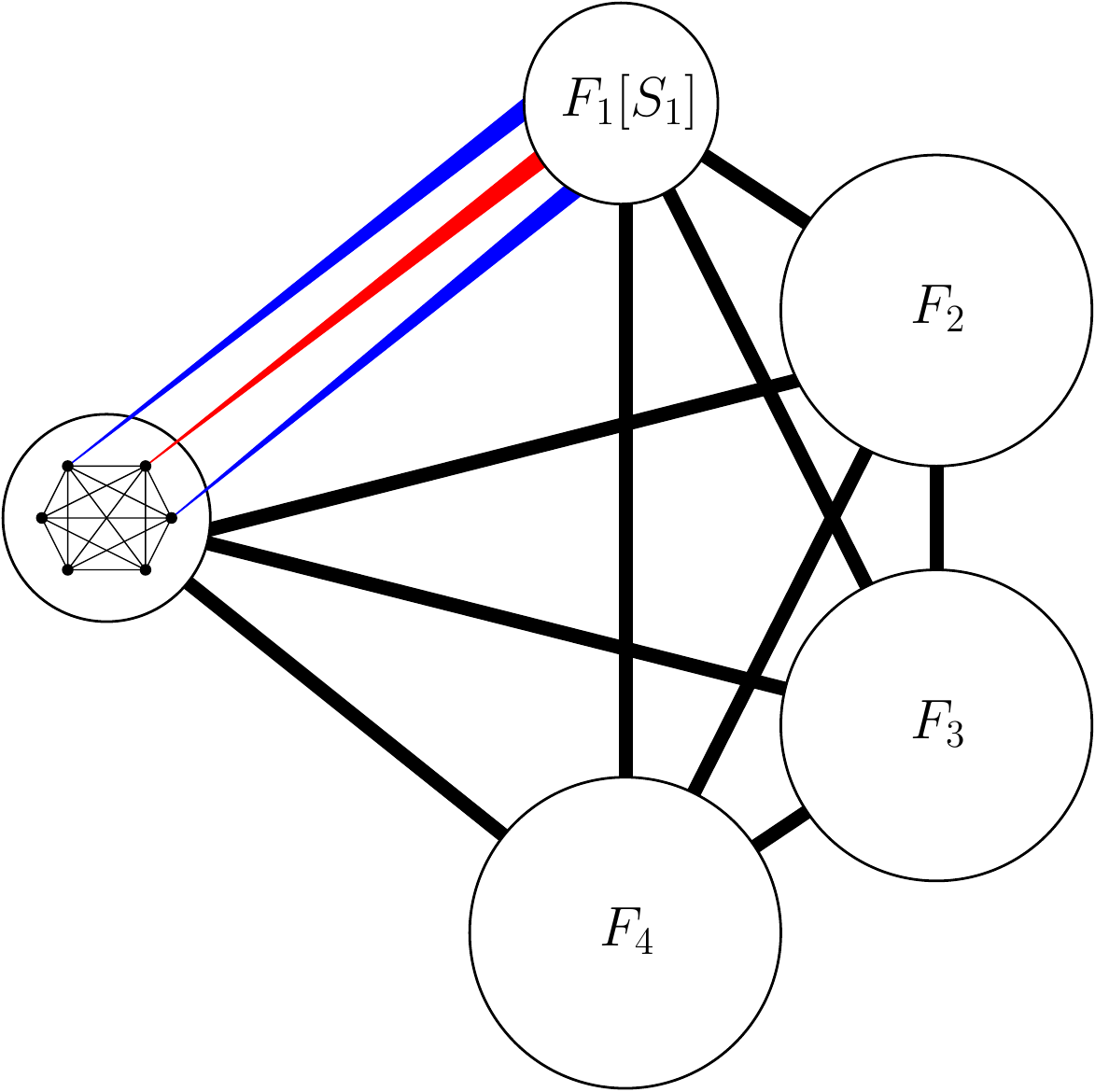}
	 \caption{Colour-focusing between $H=K_6$ and $F_1$.}
	 \label{fig:Picture1}
\end{subfigure} 
\qquad
 \begin{subfigure}[b]{0.45\textwidth}
 	\centering
	\includegraphics[width=0.6\textwidth]
	{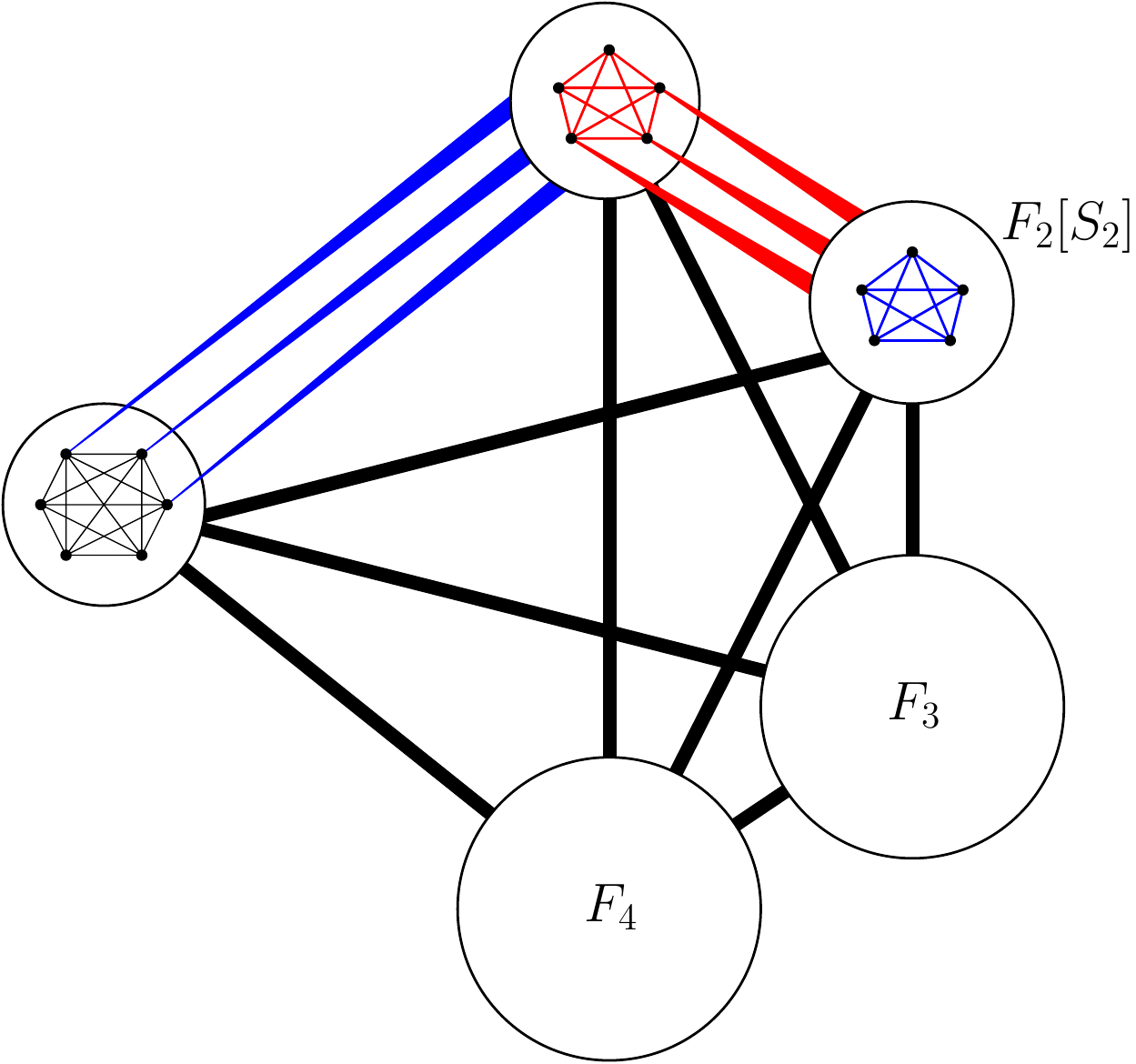}
	 \caption{There cannot be a blue $K_5$ in $F_2[S_2]$.}
	 \label{fig:Picture2}
 \end{subfigure}
 \caption{Illustrating the proof of Lemma \ref{ourGadget1}.}
 \label{fig:Pictures1and2}
 \end{figure}

For (2), assume $\chi$ is a red-blue colouring 
of the edges of $G_0$ without a monochromatic $K_k \cdot K_2$. We show that this forces $H$ to be monochromatic. By taking $A = V(H)$ and $B=V(F_1)$ in part (a) of the Focusing Lemma, we 
find a subset $S_1\subseteq V(F_1)$ such that $|S_1| \geq 2^{-k} v(F)$ and for each $a \in A$ the edges from $a$ to $S_1$ are monochromatic (see Figure \ref{fig:Picture1}). Then $|S_1| > \eps v(F)$, hence $F_1[S_1] \rightarrow K_{k-1}$.
Fix a monochromatic copy $H_1$ of $K_{k-1}$ contained in $S_1$, and assume without loss of generality that $H_1$ is red. 
We claim that all edges between $V(H)$ and $S_1$ (and in particular to $V(H_1)$) are blue. 
Indeed, if one vertex $i$ of $H$ had red edges to $S_1$, then $i$ along with $H_1$ and one (arbitrary) other vertex $v$ of $S_1$ would form a red copy of $K_k \cdot K_2$, a contradiction to our assumption on the colouring $\chi$.

We now iterate this argument. Assume we have found red cliques $H_1,\ldots,H_{t-1}$ in $F_1,\ldots,F_{t-1}$ with vertex sets $V_1,\ldots,V_{t-1}$, respectively, and that all the edges between these cliques as well as to $H$ are blue. By part (a) of the Focusing Lemma, in $F_t$ there is some subset $S_t \subseteq V(F_t)$ of the vertices of size at least $2^{-tk} v(F_t)$, so that each vertex $v \in V(H) \cup V_1 \cup V_2 \cup \cdots \cup V_{t-1}$ is monochromatic to $S_t$. Since $\size{S_t} > \eps v(F_t)$, we have $F_t[S_t] \rightarrow K_{k-1}$. We find a monochromatic copy of $K_{k-1}$ in $S_t$ and call it $H_t$. Assume for contradiction that $H_t$ is blue. In this case as before, all the edges between $H_t$ and $H$ as well as between $H_t$ and $H_1,\ldots,H_{t-1}$ would have to be red, otherwise there would be a blue $K_k\cdot K_2$. But if all these edges are red, then any two vertices of $H_t$ together with $H_1$ form a red $K_k\cdot K_2$ (see Figure \ref{fig:Picture2}). Hence, $H_t$ must be red, and as before all edges between $H_t$ and $H$ as well as between $H_t$ and $H_1,\ldots,H_{t-1}$ must be blue.

After applying this argument to $F_{k-2}$, we have a collection $H_1,\ldots,H_{k-2}$ of red $(k-1)$-cliques and 
complete bipartite blue graphs between any two of $H,H_1,\ldots,H_{k-2}$. 
Now, if some edge in $H$ were blue, this edge along with one vertex from each of $H_1,\ldots,H_{k-2}$ and any (arbitrary) other vertex from $H_1$ would create a blue $K_k \cdot K_2$.
Therefore, every edge of $H$ must be red, as desired.
\qed

The following lemma completes the proof of Theorem \ref{thm:cliqueEdge}.

\begin{lemma}\label{finalGadget100}
For every $k\geq 3$ there is a graph $G$ which contains a vertex $v$ of degree $k-1$ so that $G \rightarrow K_k \cdot K_2$ but $G-v \nrightarrow K_k \cdot K_2$.
\end{lemma}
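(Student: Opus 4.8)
The plan is to build $G$ out of several copies of the gadget $G_0$ from Lemma \ref{ourGadget1}, arranged so that a single low-degree vertex $v$ becomes the bottleneck for the whole construction being Ramsey for $K_k \cdot K_2$. Recall that $G_0$ contains a designated $K_k$, call it $H$, which is forced to be monochromatic in any colouring of $G_0$ avoiding a monochromatic $K_k \cdot K_2$. The idea is to take $k-1$ disjoint copies $G_0^{(1)}, \dots, G_0^{(k-1)}$ of $G_0$, with corresponding forced cliques $H^{(1)}, \dots, H^{(k-1)}$, and then attach a new apex vertex $v$ joined to exactly one vertex $u_i$ in each $H^{(i)}$, so that $\deg(v) = k-1$. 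Let $G$ be the resulting graph. The vertex $v$ together with the $u_i$ should be set up so that, in any good colouring, the monochromatic colours of the $H^{(i)}$ interact with the colours of the edges $v u_i$ to force a monochromatic $K_k \cdot K_2$ somewhere; but removing $v$ destroys exactly this mechanism, and one exhibits a good colouring of $G - v$.

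The key steps, in order, would be: (i) Fix the construction precisely --- $k-1$ copies of $G_0$, pairwise disjoint, plus the apex $v$ adjacent to one chosen vertex $u_i \in V(H^{(i)})$ for each $i$. (Possibly one also needs to add some complete bipartite graphs between the copies, or choose the $u_i$ more carefully, depending on how the argument in step (ii) plays out; the cleanest version may instead route all the edges from $v$ into a \emph{single} copy's clique, or use that $v$ together with a monochromatic $H^{(i)}$ already forms a $K_k \cdot K_2$ if the edges $v u_i$ match the colour of $H^{(i)}$.) Actually the simplest workable version: let $v$ be adjacent to all $k-1$ vertices of a single clique $H^{(1)}$, wait --- that has the wrong count unless $H^{(1)} = K_{k-1}$; so instead take $G_0$'s forced clique to be a $K_{k-1}$ rather than $K_k$ by invoking Lemma \ref{ourGadget1} with $k-1$ in place of $k$ is not quite right either since we need $K_k \cdot K_2$. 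The honest approach: use $k-1$ gadgets, each forcing a monochromatic $K_k$, and join $v$ to one vertex of each. (ii) Show $G \to K_k \cdot K_2$: given any colouring $\chi$ of $G$, restrict to each $G_0^{(i)}$; if any restriction already contains a monochromatic $K_k \cdot K_2$ we are done, so assume not, hence each $H^{(i)}$ is monochromatic, say in colour $c_i$. By pigeonhole among the $k-1$ edges $v u_i$ and the two colours, or by comparing $c_i$ with the colour of $v u_i$, locate an index $i$ where $\chi(v u_i) = c_i$; then $v$, together with $H^{(i)}$ and one extra vertex of $H^{(i)}$, is a monochromatic $K_k \cdot K_2$. (One must check the arithmetic makes this forced --- this is where the exact number of copies and the choice of how $v$ attaches matters, and may require adding more structure, e.g. making $v$ adjacent to \emph{two} vertices per clique, or taking more copies and using that the colours $c_i$ cannot all disagree with all the $vu_i$-edges.) (iii) Show $G - v \not\to K_k \cdot K_2$: by Lemma \ref{ourGadget1}(1) each $G_0^{(i)}$ has a colouring with no red $K_k \cdot K_2$ and no blue $K_k$; take the union of these colourings and colour any edges between distinct copies (if present) blue. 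Since there is no blue $K_k$ anywhere and the red components are confined to individual copies, there is no monochromatic $K_k \cdot K_2$ in $G - v$; in particular $\deg_{G-v}$ of things is fine and $v$ is genuinely crucial. (iv) Conclude: any $K_k\cdot K_2$-minimal subgraph $G' \subseteq G$ must contain $v$ (else $G' \subseteq G - v$, contradiction), and since $\deg_{G'}(v) \le \deg_G(v) = k-1$ we get $s(K_k \cdot K_2) \le k-1$; combined with the lower bound of Szab\'o--Zumstein--Z\"urcher this gives Theorem \ref{thm:cliqueEdge}.

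The main obstacle I anticipate is step (ii): making the parity/counting argument actually \emph{force} a monochromatic $K_k \cdot K_2$ from the data ``$k-1$ monochromatic $K_k$'s plus $k-1$ pendant-type edges at $v$.'' Naively, an adversary could colour each $H^{(i)}$ red and each edge $v u_i$ blue, and then $v$ has all blue edges to the gadgets while the red cliques are isolated from $v$ --- no monochromatic $K_k \cdot K_2$ arises at $v$ directly. So the construction must be cleverer: one likely needs the copies of $G_0$ to be \emph{joined to each other} by complete bipartite graphs (as in the construction of $G_0$ itself), so that if $H^{(i)}$ and $H^{(j)}$ are the same colour $c$ then the bipartite edges between them must avoid being colour $c$ (else a monochromatic $K_k \cdot K_2$), which propagates constraints; and then $v$, adjacent to one vertex per clique, closes the argument. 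Alternatively --- and this is probably the intended route --- one uses the $2^{-k^2}$-robustness built into $G_0$ and a \emph{further} Focusing-Lemma / Ramsey-gadget layer so that $v$'s $k-1$ neighbours themselves sit inside a structure that is Ramsey for $K_{k-1}$, forcing a monochromatic $K_{k-1}$ in $N(v)$ whose colour must agree with some edge at $v$. Pinning down exactly which additional gadgetry is needed to make $v$'s degree-$(k-1)$ neighbourhood do the work, while still admitting the good colouring of $G-v$ in step (iii), is the crux; the rest is bookkeeping with the already-established lemmas.
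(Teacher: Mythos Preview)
Your framework is exactly right and matches the paper: take $k-1$ disjoint copies of $G_0$ with forced cliques $H^{(1)},\dots,H^{(k-1)}$, pick one vertex $u_i$ in each $H^{(i)}$, and attach a new vertex $v$ to $u_1,\dots,u_{k-1}$. You also correctly locate the obstacle: with only these edges, the adversary colours every $H^{(i)}$ red and every $vu_i$ blue, and nothing forces a monochromatic $K_k\cdot K_2$.

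The gap is that neither of your proposed fixes works, and you do not arrive at the one that does. Adding complete bipartite graphs between the entire copies $G_0^{(i)}$ breaks step~(iii): in the good colouring of $G-v$ (each copy coloured as in Lemma~\ref{ourGadget1}(1), all inter-copy edges blue), one can take two blue-adjacent vertices inside one copy together with one vertex from each of the other $k-2$ copies to get a blue $K_k$, and any further vertex supplies the blue pendant. The ``further Focusing-Lemma layer'' idea is too vague to evaluate, and in any case $N(v)$ has only $k-1$ vertices so no Ramsey gadget fits there.

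The paper's fix is much more surgical: add \emph{only} the edges making $u_1,\dots,u_{k-1}$ into a $K_{k-1}$, plus one extra edge $u_1u_k$ where $u_k\in V(H^{(2)})\setminus\{u_2\}$. Then, assuming no copy of $G_0$ already contains a monochromatic $K_k\cdot K_2$, each $H^{(i)}$ is monochromatic; the edges $u_iu_j$ force all $H^{(i)}$ to have the \emph{same} colour, say red (since $u_iu_j$ together with whichever of $H^{(i)},H^{(j)}$ matches its colour gives a $K_k\cdot K_2$). Now every $u_iu_j$, $u_1u_k$, and $vu_i$ must be blue (else a red $K_k\cdot K_2$ with the corresponding $H^{(i)}$), so $\{v,u_1,\dots,u_{k-1}\}$ is a blue $K_k$ and $u_k$ supplies the blue pendant edge $u_1u_k$. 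For $G-v$, colour the new edges blue; no blue $K_k$ arises because the only blue edges leaving any $G_0^{(i)}$ go through $u_i$ to the other $u_j$, and the edge $u_2u_k$ is red (it lies in $H^{(2)}$). The two ideas you are missing are precisely this $K_{k-1}$ on the $u_i$ and the single extra pendant edge $u_1u_k$.
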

\begin{proof}
Take $k-1$ copies $G_1,\ldots,G_{k-1}$ of the gadget graph $G_0$ from Lemma \ref{ourGadget1}, and let $H_1,\ldots,H_{k-1}$ be the copies of $K_k$ guaranteed to be monochromatic in any colouring without a monochromatic $K_k \cdot K_2$. Pick one vertex $v_i$ in each $H_i$, and insert all edges between the $v_i$, so they form 
a $K_{k-1}$.
In addition, pick an arbitrary vertex $v_k \neq v_2$ from $V(H_2)$ and insert an edge between it and $v_1$. Finally, add a vertex $v$ to the graph, and connect it to $v_1,\ldots,v_{k-1}$. This completes the construction of $G$ (see Figure \ref{fig:Picture3}). Clearly, $\deg(v) = k-1$.

\begin{figure} [htbp]
 \centering
\phantom{asdfasdfasdfaasdfasfasdfad}
 \includegraphics[width=0.6\textwidth]{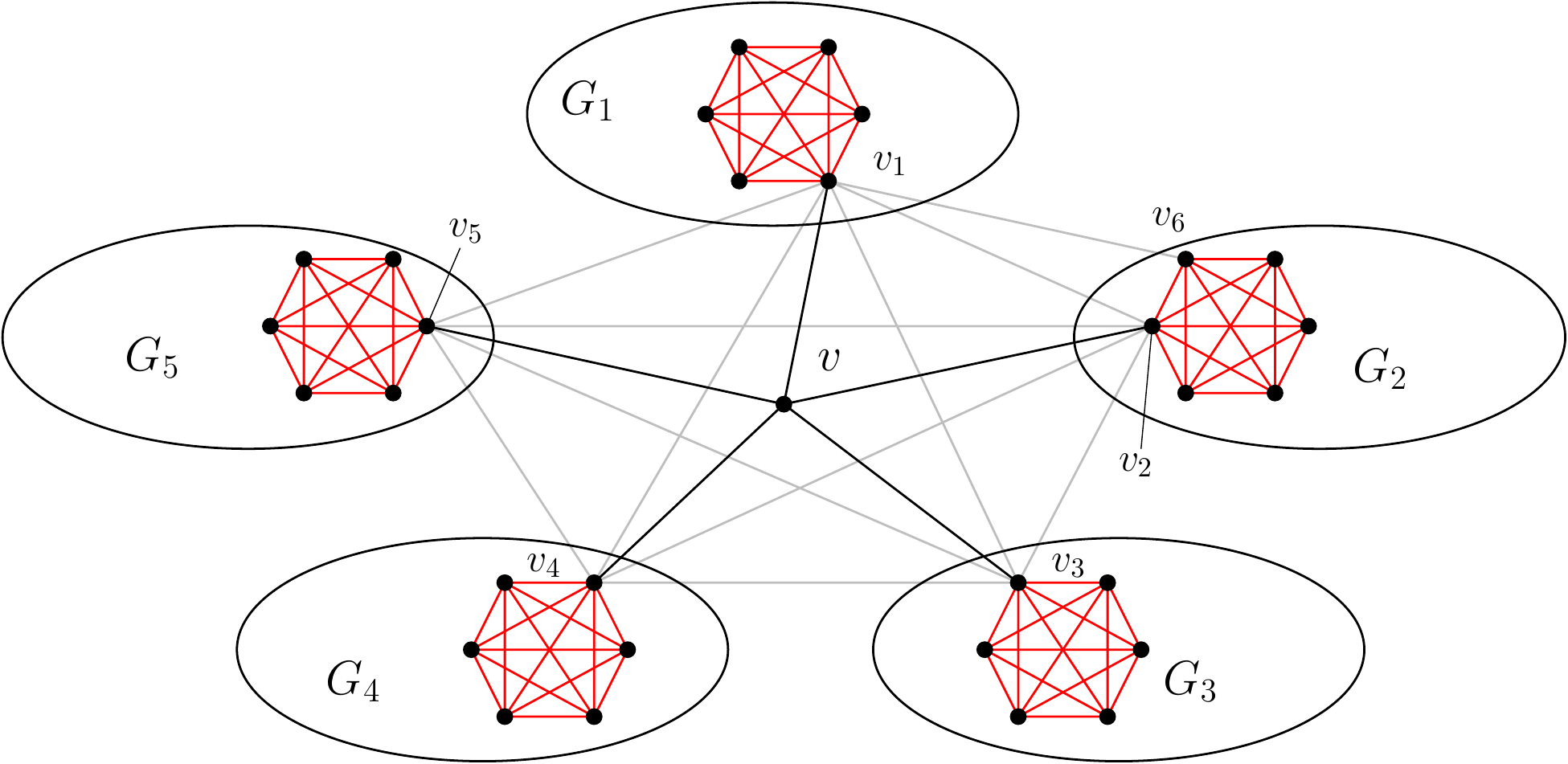}
 \caption{An example of the graph $G$ in Lemma \ref{finalGadget100} for $k = 6$.}
 \label{fig:Picture3}
 \end{figure}

To see that $G-v \nrightarrow K_k\cdot K_2$, colour each $G_i$ so it has no red $K_k\cdot K_2$ and no blue $K_k$. By property (2) of the gadget $G_0$ this also means that every $H_i$ is monochromatic red.
Colour the edges between $\{v_1,\ldots,v_{k-1}\}$ and the additional edge $\{v_1,v_k\}$ blue. Since none of the $G_i$ had a red $K_k\cdot K_2$ and we did not add any red edges, this colouring has no red $K_k \cdot K_2$. The $G_i$ have no blue $K_k$, and for $i =1, \ldots , k-1$ the vertex $v_i$ has no blue edges leaving $G_i$ except those to the other $v_j$. But the edge $\{ v_2, v_k\}$ is red, therefore there is no blue $K_k$ and in particular no blue $K_k \cdot K_2$.

Finally, we show that $G \rightarrow K_k \cdot K_2$. Let any colouring of $G$ be given, and suppose none of the copies of $G_0$ contains a monochromatic copy of $K_k\cdot K_2$. Then all of $H_1,\ldots,H_{k-1}$ are monochromatic. We claim they have the same colour. Indeed, if $H_i$ and $H_j$ had different colours, then the edge $v_iv_j$ would induce a monochromatic $K_k\cdot K_2$ with whichever copy of $K_k$ had the same colour as its own.

So all of the $H_i$ have the same colour; without loss of generality, let this colour be red. If any of the edges $v_iv_j$, for 
$1\leq i < j \leq k-1$ or for $i=1$, $j=k$ were red, 
then along with $H_i$ it would form a red $K_k \cdot K_2$. 
Similarly, if any of the edges $vv_i$ were red, then along with $H_i $ it would induce a red $K_k\cdot K_2$. Otherwise, all these edges are blue and then 
$v,v_1,\ldots,v_{k-1}$ and $v_k$ form a blue $K_k\cdot K_2$, as desired.
\end{proof}

\section{Clique with some disjoint smaller cliques}\label{sect:additionalcliques}
Recall that $K_k + f \cdot K_t$ denotes the disjoint union of 
a $K_k$ and $f$ copies of $K_t$. Also,  $f(k,t)$ is the largest number $f$ so that $K_k$ and $K_k + f \cdot K_t$ are Ramsey-equivalent. In this section, we prove Theorem \ref{THM:DisjointCliques}, which gives an upper bound on $f(k,t)$ for $t \geq 3$ and determines it up to roughly a factor of $2$. 

\begin{proof}[Proof of Theorem \ref{THM:DisjointCliques}]
Let $f=\left\lfloor \frac{R(k,k-t+1)-1}{t}\right\rfloor+1$. 
We will construct a graph $G$ 
with the following two properties. 
\begin{itemize}
\item[$(G1)$] $G \rightarrow K_k$ and 
\item[$(G2)$] $G \nrightarrow K_k + f \cdot K_t$.
\end{itemize}
{\bf Construction of $G$.\\}
$G$ will be constructed by combining a number of smaller graphs. For a positive integer $h$ and graphs $G_0,F_1,\ldots,F_{n_0}$ where $n_0$ is the number of vertices of $G_0$, define $G=G(h,G_0,F_1,\ldots,F_{n_0})$ as follows.

Take pairwise disjoint sets $V_H$ and $V_i$, $1\leq j\leq n_0$, such that $\size{V_H}=h$ and $|V_j| = |V(F_j)|$. Set $V:= V_H \cup \bigcup_{j=1}^{n_0} V_j$. Label the vertices of $H$ as $v_1,\ldots,v_h$.
The edge set $E$ is defined as follows. 
\begin{itemize}
\item $G[V_H] \cong K_h$, 
\item $G[V_j] \cong F_j$  for all $1\leq j \leq n_0$,
\item $v_iw \in E(G)$  for all $1\leq i \leq h$, and all $w\in \bigcup_{j=1}^{n_0} V_j$, 
\item  for all  $u\in V_i,\, w \in V_j$, $uw \in E(G)$ if and only if $ij\in E(G_0)$.
\end{itemize}
That is, our gadget graph consists of one copy of each $F_j$ together with a copy of a complete 
graph on $h$ vertices. Furthermore, we place a complete bipartite graph between $F_i$ and $F_j$ whenever 
$ij$ is an edge in $G_0$, and a complete bipartite graph between $V_H$ and $\bigcup_{j=1}^{n_0} F_j$ (see Figure \ref{fig:Picture4}).
\begin{figure} [tbp]
 \centering
\phantom{asdfasdfasdfaasdfasfasdfad}
 \includegraphics[width=0.6\textwidth]{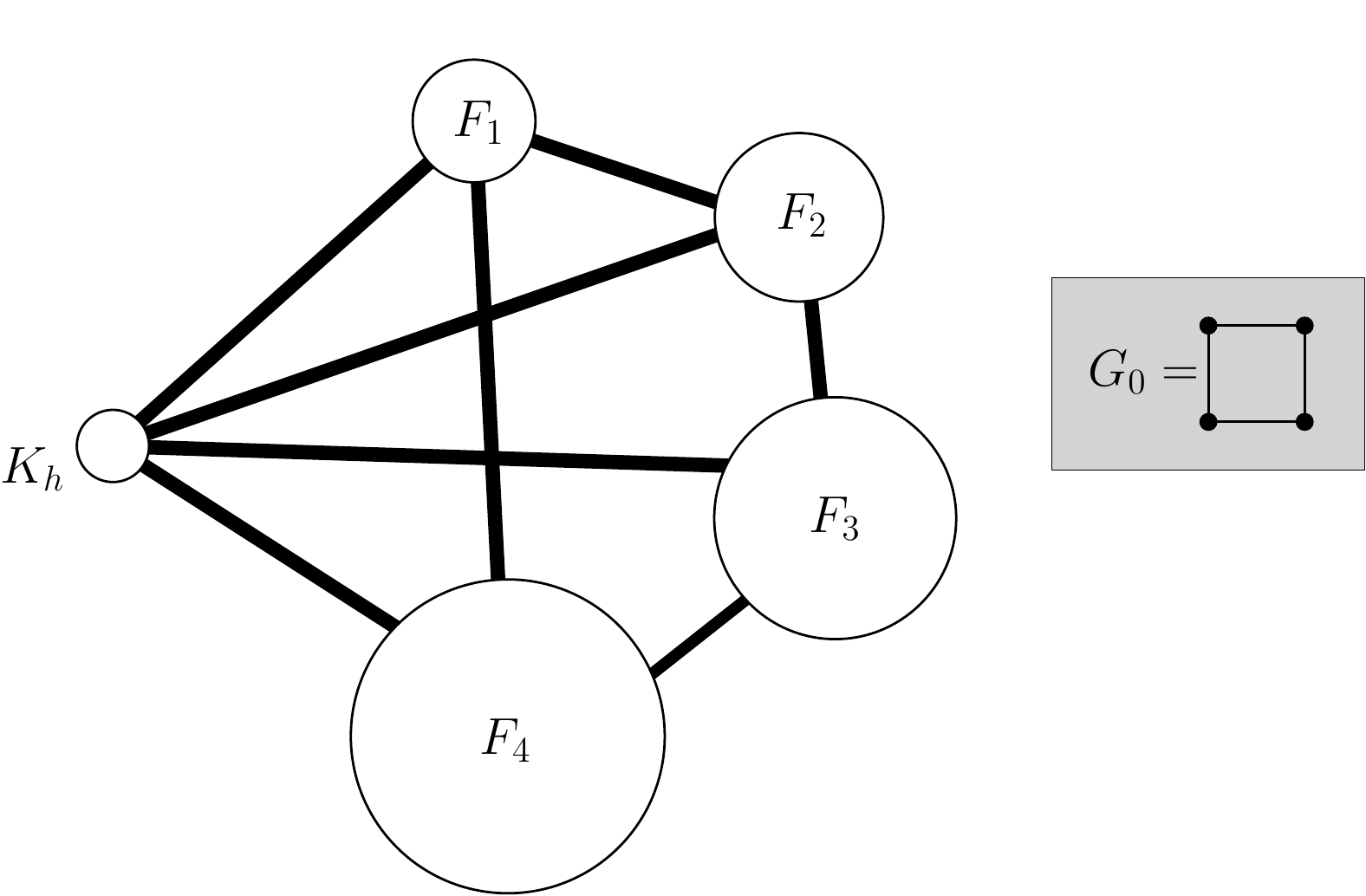}
 \caption{An illustration of $G(h,G_0,F_1,F_2,F_3,F_4)$ when $G_0 = C_4$. A thick line indicates that the vertices of the corresponding sets are pairwise connected.}
 \label{fig:Picture4}
\end{figure}

Set $h:= R(k,k-t+1)+k-1$ and $\eps_0:= 2^{-h-1}$. 
Let $G_0$ be a graph (given by Lemma \ref{strongRamseyGadget}) such that 
\begin{align}\label{propsG0}
&K_{k-1} \not\subseteq G_0 \quad\text{and}\quad G_0 \overset{\eps_0}{\longrightarrow} K_{k-2}.
\end{align} 
Now, set $n_0:= v(G_0)$ and assume without loss of generality that $V(G_0) = [n_0]$. 
For every $1\leq j \leq n_0$, we define $F_j$ iteratively. First, let 
$\eps_1 := 2^{-(h+n_0)}$
and let $F_1$ be a graph (given by Lemma \ref{strongRamseyGadget}) such that $K_t\not\subseteq F_1$ and
$F_1 \overset{\eps_1}{\longrightarrow} K_{t-1}$.
For $2\leq j \leq n_0$, assume we have defined $\eps_1,\ldots,\eps_{j-1}$ and $F_1,\ldots,F_{j-1}$. 
We then set 
\begin{align}\label{propsEpsJ}
\eps_j &:= 2^{-\left(h + n_0 -j + \sum_{i=1}^{j-1}v(F_i) \right)}
\end{align}
and let $F_j$ be a graph (given by Lemma \ref{strongRamseyGadget}) such that 
\begin{align}\label{propsFj}
& K_t\not\subseteq F_j \quad\text{and}\quad F_j \overset{\eps_j}{\longrightarrow} K_{t-1}.
\end{align}

Define the graph $G:=G(h,G_0,F_1,\ldots,F_{n_0})$, and take $V=V(G),E=E(G)$. Take $H$ to be the copy of $K_h$.

We now show that $G$ fulfills the two conditions $(G1)$ and $(G2)$ above.

{\bf The graph $G$ has property $(G2)$.\\}
To see that $G  \nrightarrow K_k + f \cdot K_t$, colour all edges inside $H$ and inside the copy of each $F_j$ red, 
and all edges between $H$ and $F_j$'s blue. 
Then the largest blue clique has size $k-1$ (since $G_0$ is $K_{k-1}$-free). 
So any monochromatic copy of $K_k + f \cdot K_t$ would need to be red. 
Since all the $F_j$'s are $K_t$-free, the red copy of $K_k + f \cdot K_t$ 
needs to lie inside $H$. 
However, $v(K_k + f \cdot K_t) = k+ft \geq k + R(k,k-t+1)> v(H)$. 
So $H$ cannot host a copy of $K_k + f \cdot K_t$.

{\bf The graph $G$ has property $(G1)$.\\}
Let $\chi:E\rightarrow \{\text{red, blue}\}$ be a 2-colouring of $G$. 
We apply a similar ``colour-focusing'' procedure as in the proof of Lemma \ref{ourGadget1}. 
This technique is used to obtain Lemma \ref{subgraphColourStructure}, which shows that there is a vertex subset for which the colouring is highly structured. From this lemma, it is not difficult to prove that there must be a monochromatic $K_k$.   

\begin{lemma}\label{subgraphColourStructure}
There exist a subset $J \subseteq [n_0]$ and subsets $W_j\subseteq V_j$ for each $j\in J$ 
such that the following holds. 
\begin{itemize}
\item[$(a)$] $|J| \geq n_0/2^h = 2\eps_0 n_0$, 
\item[$(b)$]  for all $j\in J$, $W_j$ is the vertex set of a monochromatic $K_{t-1}$ under $\chi$, 
\item[$(c)$]  for all $i,j\in J$ with $ij\in E(G_0)$, there exists $c_{ij}\in\{\text{red, blue}\}$ 
	such that for all $u\in W_i,\, w\in W_j$, $\chi (uw)=c_{ij}$.
\item[$(d)$]  for all $v_i\in V_H$, 
there exists $c_i \in \{\mbox{red, blue}\}$ such that for all $u\in \bigcup_{j\in J} W_j$, $\chi (v_iu)=c_i$.
\end{itemize}
\end{lemma}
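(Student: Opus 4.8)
The plan is to obtain the four conclusions of Lemma~\ref{subgraphColourStructure} by iterated colour-focusing, exactly in the spirit of the proof of Lemma~\ref{ourGadget1}, but now tracking the colours \emph{between} the focused sets rather than only the colours \emph{onto} them. First I would process the copies $F_1, \dots, F_{n_0}$ one at a time. At stage $j$, I will have already selected, inside some of the previously processed $F_i$'s ($i < j$), a focused subset $S_i \subseteq V_i$; the set $A$ to focus against will be $V_H$ together with all vertices of all these previously chosen $S_i$. Applying part~(a) of the Focusing Lemma with $A$ as above and $B = V_j$, I obtain a subset $S_j \subseteq V_j$ with $|S_j| \ge |V_j| / 2^{|A|}$ such that every vertex of $A$ sends a monochromatic star to $S_j$. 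The crucial point is the bookkeeping of sizes: the exponents $\eps_j$ in \eqref{propsEpsJ} have been chosen precisely so that after all these divisions — the worst case being $|A| = h + (n_0 - 1) + \sum_{i<j} v(F_i)$ minus whatever we can save — we still have $|S_j| > \eps_j \, v(F_j)$, hence $F_j[S_j] \to K_{t-1}$ by \eqref{propsFj}. I would fix a monochromatic $K_{t-1}$ inside $S_j$; however, its colour is not yet the issue — what matters is which $F_j$'s we keep.

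The second idea is to now apply part~(a) of the Focusing Lemma \emph{one more time}, this time at the level of the ``supergraph'' $G_0$. Define for each index $j \in [n_0]$ its colour pattern towards $V_H$: since every vertex $v_i \in V_H$ is monochromatic onto $S_j$ (and onto the chosen $K_{t-1}$ inside it), this pattern is a function $V_H \to \{\text{red}, \text{blue}\}$, so there are only $2^h$ possibilities. By pigeonhole, at least $n_0 / 2^h$ of the indices share a common pattern; let $J$ be this set, which gives $(a)$ with the stated bound $n_0/2^h = 2\eps_0 n_0$, and gives $(d)$: for $v_i \in V_H$ the colour $c_i$ is simply the $i$-th coordinate of the common pattern, which is constant over all $W_j$ with $j \in J$. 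For $j \in J$, set $W_j$ to be the vertex set of the monochromatic $K_{t-1}$ fixed inside $S_j$; this is $(b)$. Finally $(c)$: if $i, j \in J$ and $ij \in E(G_0)$, then there is a complete bipartite graph between $V_i$ and $V_j$ in $G$; without loss of generality $i < j$, so when we processed $F_j$ the set $A$ already contained all of $S_i \supseteq W_i$, hence every vertex of $W_i$ sends a monochromatic star to $S_j \supseteq W_j$. A priori this only says each $u \in W_i$ is monochromatic onto $W_j$ — to get a \emph{single} colour $c_{ij}$ for all pairs, I would, at the moment $F_j$ is processed, additionally focus $W_i$ itself down by a further factor of two (as in part~(b) of the Focusing Lemma) so that all of $W_i$ agrees; alternatively, and more cleanly, just reselect $W_i$ inside $S_i$ at the very end using the Focusing Lemma applied to the bipartite graphs to each later $W_j$ with $ij \in E(G_0)$ — but since $|E(G_0)|$ could be large one must be careful that these extra halvings were budgeted. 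The cleanest route is to build the ``single colour between $W_i$ and $W_j$'' condition into the iteration: when choosing $S_j$, first apply part~(a) to get the monochromatic-star property, then inside each already-chosen $W_i$ that is $G_0$-adjacent to $j$, we already have that every $u \in W_i$ is monochromatic to $S_j$, and we restrict $W_i$ to those vertices whose star-colour onto $S_j$ equals the majority — but this would shrink $W_i$ and break its $K_{t-1}$ structure.

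Given that tension, I expect the actual main obstacle to be making $(c)$ hold with a genuinely uniform colour $c_{ij}$ without destroying $(b)$, and I think the right fix is ordering-based: process the $F_j$ in order $j = 1, 2, \dots, n_0$, and when we reach $F_j$ we have \emph{not yet fixed} the $K_{t-1}$'s inside $S_1, \dots, S_{j-1}$ — we only fix all of them simultaneously at the end. Concretely, carry along the sets $S_1 \supseteq S_2' \supseteq \cdots$: each time we process a new $F_j$, we get that every vertex currently in $V_H \cup S_1 \cup \cdots \cup S_{j-1}$ is monochromatic to the new $S_j$. After all $n_0$ stages, within any $S_i$ every vertex has, for each $j > i$ with $ij \in E(G_0)$, a fixed colour of its star to $S_j$; since there are at most $n_0$ such constraints per vertex and each is a red/blue label, the vertices of $S_i$ split into at most $2^{n_0}$ classes by their constraint-vectors, and the $\eps_i$'s must be chosen large enough to absorb this final $2^{n_0}$ factor (which is why \eqref{propsEpsJ} carries the additive $n_0 - j$ term) so that the largest class still has size $> \eps_i v(F_i)$ and hence contains a monochromatic $K_{t-1}$; take $W_i$ inside that class. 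Then $(c)$ holds because all of $W_i$ has the same star-colour to $S_j \supseteq W_j$ and, symmetrically run, all of $W_j$ has the same star-colour to $W_i$, forcing a constant $c_{ij}$. I would double-check that the exponent in \eqref{propsEpsJ} exactly accounts for: the $h$ vertices of $V_H$, the $n_0$-many (really $n_0 - j$) later-constraint halvings, and the $\sum_{i<j} v(F_i)$ vertices in earlier focused sets — and conclude the lemma, leaving the (easy) deduction of a monochromatic $K_k$ from $J$, $G_0 \xrightarrow{\eps_0} K_{k-2}$, and $|J| \ge 2\eps_0 n_0$ to the subsequent argument.
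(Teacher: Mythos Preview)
Your final scheme is correct and lands on essentially the paper's argument. The architecture is identical: focus $V_H$ against the $V_j$'s, pigeonhole on the $2^h$ possible colour patterns to extract $J$ with $|J|\ge n_0/2^h$, then iteratively shrink the sets so that $G_0$-adjacent pairs become monochromatically joined, and finally pick each $W_i$ as a monochromatic $K_{t-1}$ inside a surviving set of size at least $\eps_i\,v(F_i)$.

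The one organizational difference is in how property~(c) is obtained. The paper first passes to $J$ and then, for each pair $i<j$ in $J$ with $ij\in E(G_0)$, applies part~(b) of the Focusing Lemma with the current $V_i''$ as $A$ and $V_j''$ as $B$; this produces a single colour $c_{ij}$ between the shrunk sets in one stroke, halving $V_i''$ and dividing $|V_j''|$ by at most $2^{v(F_i)}$, and the accumulated losses give exactly the exponent $h+(n_0-i)+\sum_{j<i}v(F_j)$ in $\eps_i$. You instead use only part~(a) throughout---making every earlier vertex monochromatic onto the new $S_j$---and then, at the very end, run one more pigeonhole inside each $S_i$ on its forward colour-vector over the at most $n_0-i$ later $G_0$-neighbours; this is indeed what the $n_0-j$ term in \eqref{propsEpsJ} is budgeting for. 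The arithmetic matches, and either mechanism works. One small cleanup: when you form $A$ at stage $j$ you should only include those $S_i$ with $ij\in E(G_0)$, since otherwise there is no complete bipartite graph on which to invoke the Focusing Lemma; this restriction only improves your size estimate.
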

\begin{figure} [bp]
 \centering
\phantom{asdfasdfasdfaasdfasfasdfad}
 \includegraphics[width=0.6\textwidth]{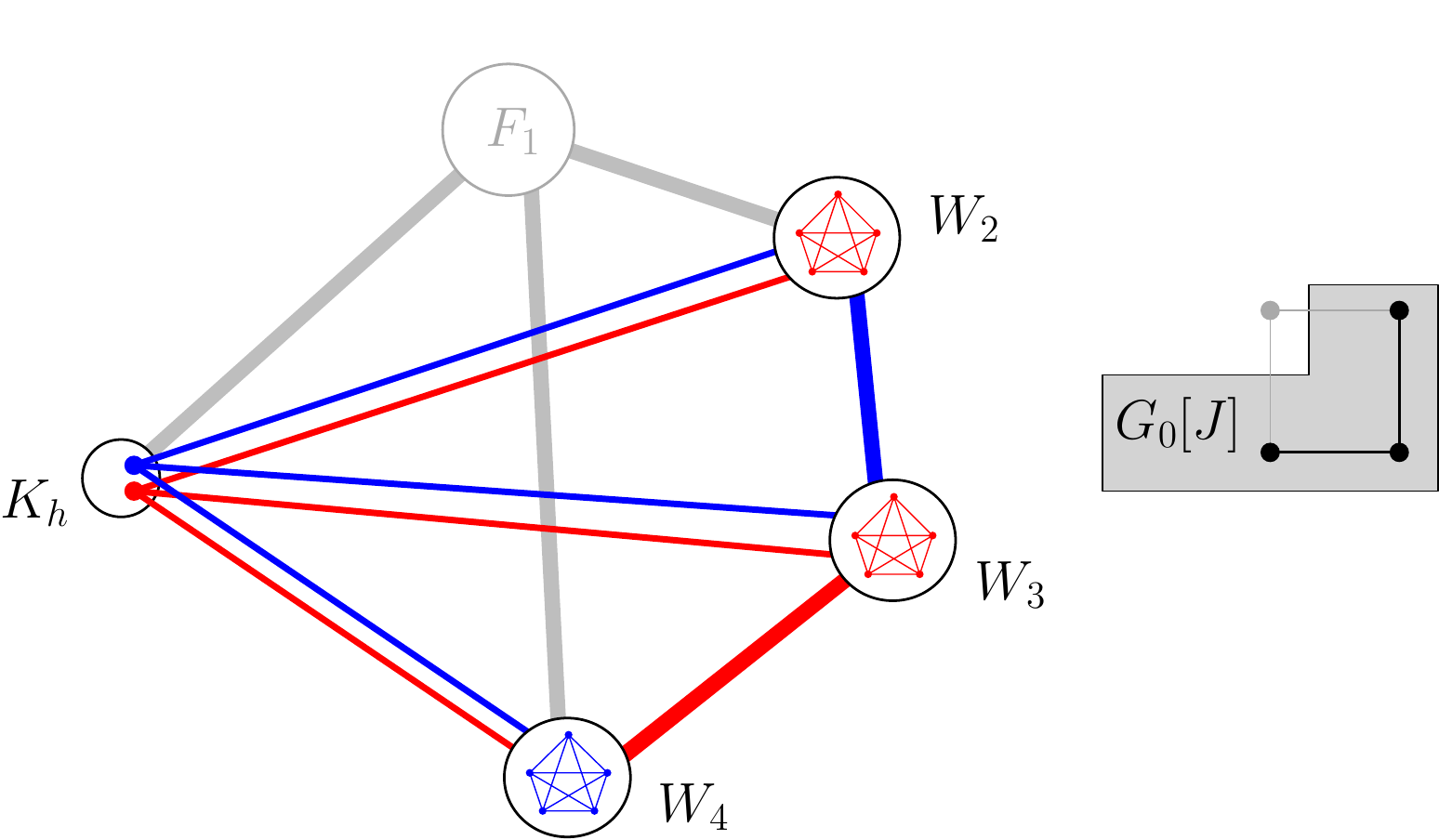}
 \caption{The colour patterns we find with Lemma \ref{subgraphColourStructure}.}
 \label{fig:Picture6}
 \end{figure}
The structure of the sets $J$ and $W_j$ in Lemma \ref{subgraphColourStructure} is depicted in 
Figure \ref{fig:Picture6}. Before proving the lemma, we first show how it implies that there is 
a monochromatic $K_k$ in $G$, which implies $(G1)$. 

{\bf Proof of $(G1)$ assuming Lemma \ref{subgraphColourStructure}.\\}
Let $J' \subseteq J$ with $|J'| \geq |J|/2$ be such that all $W_j$ with $j \in J'$ are monochromatic of the {\it same} colour. Consider the induced subgraph $G_0' := G_0[J']$ of $G_0$. Let $\chi'$ be the edge-colouring of 
$G_0'$ where each edge $ij\in E(G_0')$ has colour $\chi' (ij) := c_{ij}$. 
Since $|J'|\geq  |J|/2 \geq \eps_0 n_0$ by property $(a)$, 
and since $G_0  \overset{\eps_0}{\longrightarrow} K_{k-2}$ by definition of $G_0$, 
there exists a monochromatic copy of $K_{k-2}$ in $G_0'$ under $\chi'$. 
Let $I\subseteq J'$ denote the vertex set of this monochromatic copy, 
and assume without loss of generality that it is blue. 
Then, for all $i,j\in I$, $i\neq j$, the sets $W_i$ and $W_j$ are connected by 
complete bipartite graphs, all edges being blue under $\chi$. 
The monochromatic $W_j$ with $j \in I \subseteq J$ are all the same colour. If they were all blue, the union of the $W_j$ with $j \in I$,
each of which is of order $t-1$ by property $(b)$, form a monochromatic blue clique of order $(k-2)(t-1) \geq k$ 
(since $k > t \geq 3$), and thus there is a monochromatic $K_k$. Therefore, we may assume from now on that each $W_j$, $j\in I$, is a red $K_{t-1}$. 

Consider now the vertices in $V_H$. Any such vertex has either only red edges or only blue edges to $\bigcup_{j\in J'} W_j$, by property $(d)$. 
We call $v_i\in V_H$ {\em red} if $c_i = red$,
and {\em blue} otherwise. 
Suppose there exist two vertices, $v_i, v_j \in V_H$ which are both {\em blue}, 
such that $\chi (v_iv_j)=\text{blue}$. Then they form a blue $K_k$ with one vertex from each 
$W_j, j\in I$. So we can assume that for two {\em blue} vertices $v_i,v_j \in V_H$ we have 
$\chi (v_i v_j) = \text{red}$. 
But then we can also assume that there are at most $k-1$ {\em blue} vertices inside $H$, 
since otherwise they form a red $K_k$ inside $H$. 
So, there are at least $v(H)-(k-1) = R(k,k-t+1)$ {\em red} vertices $V_{red}\subseteq V_H$ in $H$. 
By definition of $R(k,k-t+1)$, $V_{red}$ contains either a red $K_{k-t+1}$ or a blue $K_k$. 
In the second case, we are done. In the first case, the vertex set $V_{red} \cup W_j$ 
contains a red $K_k$ for any $j\in I$, so we are done as well. \qed

{\bf Proof of Lemma \ref{subgraphColourStructure}.\\}
We prove the lemma in two steps. First, we apply part (a) of the Focusing Lemma with $A = V_H$  and each $V(F_j)$ as $B$ in order to ensure property $(d)$. 
Then, in order to ensure property $(c)$, we restrict to smaller and smaller sets inside 
$V(F_j)$ by repeatedly applying part (b) of the Focusing Lemma. 
These two steps are illustrated in Figure \ref{fig:picture7}. 

Recall that we are given a 2-colouring $\chi: E \rightarrow \{red, blue\}$ of the edge set of $G$.
First we show that there exists an index set $J \subseteq [n_0]$ and subsets $V_j'\subseteq V_j$ for each 
$j\in J$ such that the following properties hold. 
\begin{itemize}
\item[$(a)$] $|J|\geq n_0/2^h$,
\item[$(b')$] for all $j\in J$, $|V_j'|\geq v(F_j)/2^h$, and 
\item[$(d')$] for all $v_i\in V_H$, 
there exists $c_i \in \{red, blue\}$ such that for all $u\in \bigcup_{j\in J} V_j'$, $\chi (v_iu)=c_i$.
\end{itemize}

To see this, for each $j \in [n_0]$ apply part (a) of the Focusing Lemma 
to the complete  bipartite graphs between $V_H$ and $V_j$ 
to obtain subsets $V_j' \subseteq V_j$ of size at least $v(F_j)/2^h$ such that for each vertex $v \in V_H$ and $j\in [n_0]$, the set of edges between $v$ and $V_j'$ is monochromatic. 
In other words, for each index $j \in [n_0]$ there is a function ${\bf c}_j: V_h \rightarrow \{red,blue\}$ where ${\bf c}_j(v_i)$ is the colour of the edges from $v_i$ to $V_j'$. There are $2^h$ possible functions, so there must be a set $J \subseteq [n_0]$ of at least $n_0/2^{h}$ indices with a function ${\bf c}$ such that for any $j\in J$ we have ${\bf c}_{j} = {\bf c}$. Choosing $c_i:= {\bf c} (v_i)$ guarantees 
property $(d')$.

 \begin{figure}[hbt]
 \centering
\phantom{asdfasdfasdfaasdfasfasdfad}
\begin{subfigure}[b]{0.55\textwidth}
	\centering
	 \includegraphics[scale=0.55]
	 {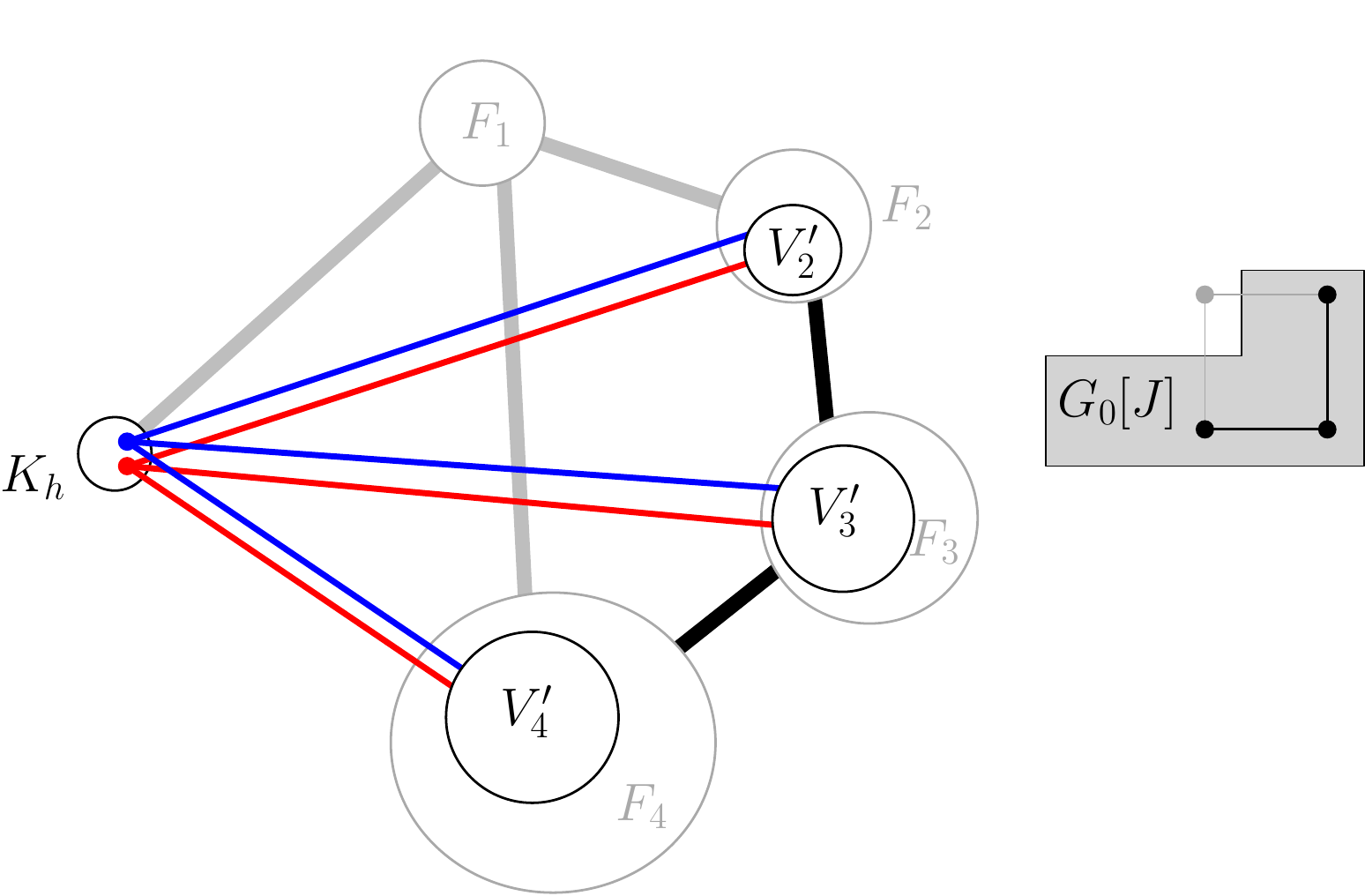}
	 \caption{Color-focusing of vertices of $K_h$.}
\end{subfigure} 
\qquad
 \begin{subfigure}[b]{0.35\textwidth}
 	\centering
	\includegraphics[scale=0.5]
	{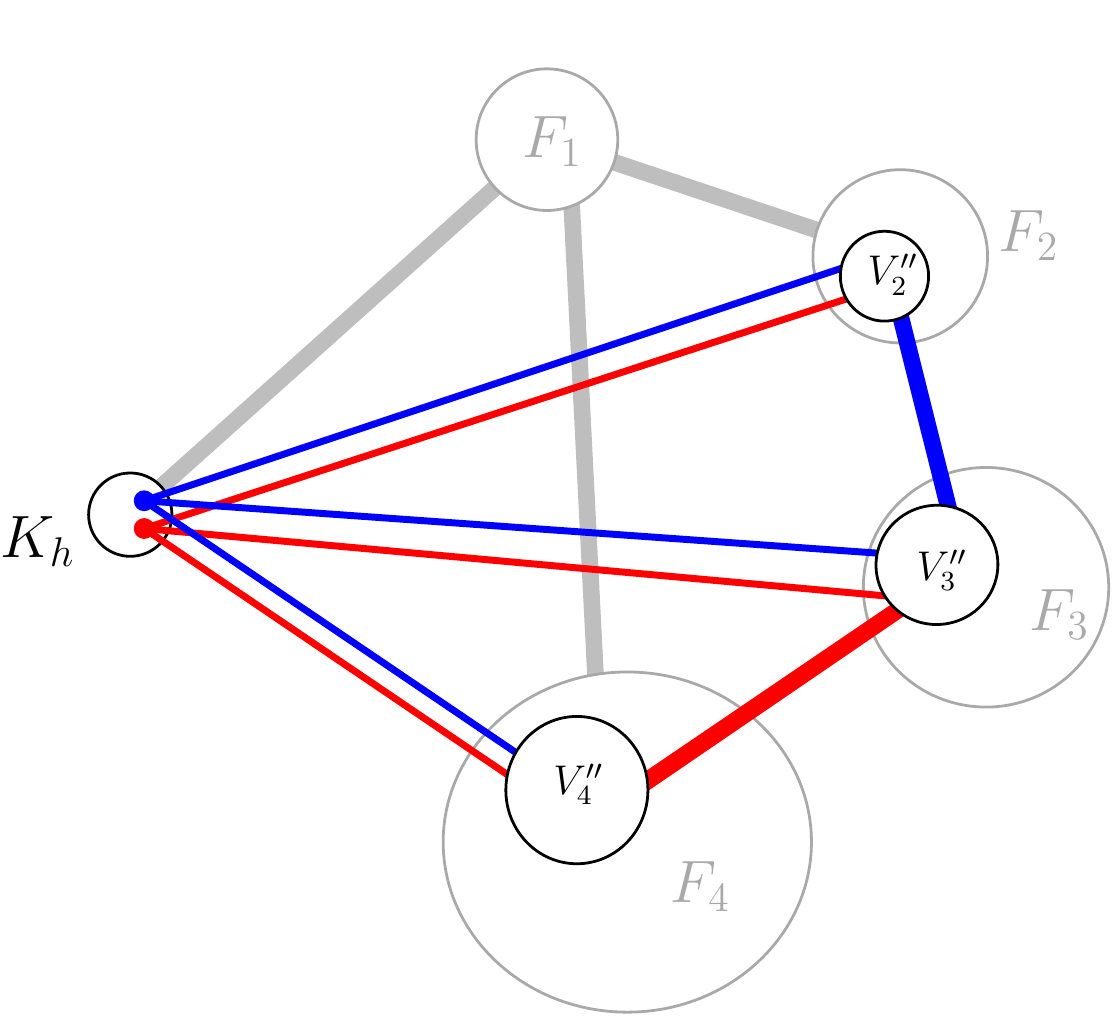}
	 \caption{Color-focusing of the complete bipartite graphs}
 \end{subfigure}
 \caption{The colour patterns we find in $G$.}
 \label{fig:picture7}
 \end{figure}
 
In the remainder of the proof we consider only the vertices in the sets $V_j'$ we have 
just defined. We will maintain subsets $V_j''\subseteq V_j'$, starting with $V_j''=V_j'$,  
and keep reducing their size  until the edges between them are monochromatically coloured for each pair. 

For ease of notation we assume $J=[\ell]$.
For each pair $i, j\in J$ with $ij\in E(G_0)$, we apply part (b) of the Focusing Lemma  
for the complete bipartite graph between the sets $V_i''$ and $V_j''$, where
$V_i''$ plays the role of $A$ and $V_j''$ plays the role of $B$ if $i < j$. 
These  applicatons are done one after another, in an arbitrary order, and 
after each of them the participating subsets $V_i''$ and $V_j''$ are redefined
to be the subsets $A'\subseteq A=V_i''$ and $B'\subseteq B=V_j''$ given by the Focusing Lemma. Hence,
after an application for the pair $i,j$, the edges between the sets 
$V_i''$ and $V_j''$ are monochromatic.  

Let $i\in J$ be an arbitrary index. The set $V_i''$ participates in an application of the 
Focusing Lemma $(i-1)$-times as the set $B$ and $(\ell -i)$-times as the set $A$.
The size of $V_i''$ might be reduced with 
each application, but the Focusing Lemma gives us a lower bound on the new size: 
it is at least half of the old size if $V_i''$ participated as $A$ and 
it is at least the $2^{-|V_j''|}$-fraction if $V_i''$ participated as $B$ together 
with some other set $V_j''$ as $A$ (with $j <i$).
Since we know how many times $V_i''$ participated as the set $A$ and how many 
times as the set $B$, we have a bound on its order at the end:
$$|V_i''|  \geq \frac{v(F_i)}{2^h} \cdot \frac{1}{2^{\ell -i}} \cdot \frac{1}{2^{\sum_{j=1}^{i-1} v(F_j)}}
\geq v(F_i)\cdot 2^{-\left(h + n_0 -i + \sum_{j<i}v(F_j) \right)} = \eps_i \cdot v(F_i),$$
where we used property $(b')$ to estimate the size of $V_i'$ at the beginning.

Since we applied the Focusing Lemma for every pair $i,j\in J,\ ij\in E(G_0)$, 
there exist $c_{ij} \in \{ red, blue\}$ such that the edges between $V_i''$ and $V_j''$ are monochromatic of colour $c_{ij}$, for every such pair.

It is now straight-forward to see that Lemma 
\ref{subgraphColourStructure} follows. 
Since each $F_i \overset{\eps_i}{\longrightarrow} K_{t-1}$ and by the above 
$V_i''$ at the end has size at least $\eps_i v(F_i)$, $V_i''$ does host a monochromatic $K_{t-1}$. 
Let $W_i$ be the vertex set of this $K_{t-1}$.
Now, since $W_i\subseteq V_i'' \subseteq V_i'$, $(c)$ and $(d)$ follow. 

As we saw earlier, the proof of Lemma \ref{subgraphColourStructure} completes the proof of Theorem \ref{THM:DisjointCliques}.
\end{proof}

\section{Open problems}\label{sect:openproblems}

Despite the progress made in this paper, we note the following interesting problems that remain open. 

Recall that $f(k,t)$ is the maximum $f$ such that $K_k$ and $K_k+f\cdot K_t$ are Ramsey-equivalent. We determined $f(k,t)$ up to roughly a factor $2$ for $k-1 > t > 2$. It would be of interest to close the gap between the lower and upper bounds. 

\begin{problem}
Determine $f(k,t)$. 
\end{problem}

A special case of this problem already asked in \cite{szz2010} is the following. Note that we have shown that 
$f(k,k-1)\leq 1$. That is, if $K_k$ and $K_k+K_{k-1}$ are Ramsey-equivalent, then $f(k,k-1)=1$ and otherwise $f(k,k-1)=0$. It is easy to see that $f(2,1)$ and $f(3,2)$ are $0$. We conjecture that for larger $k$ we have $f(k,k-1)=1$.

\begin{conjecture}
For $k$ at least $4$, $K_k$ and $K_k+K_{k-1}$ are Ramsey-equivalent.
\end{conjecture}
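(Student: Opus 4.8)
The direction $G \rightarrow K_k + K_{k-1} \Rightarrow G \rightarrow K_k$ is trivial, since every monochromatic copy of $K_k + K_{k-1}$ contains a monochromatic $K_k$. So the conjecture is equivalent to: $G \rightarrow K_k$ implies $G \rightarrow K_k + K_{k-1}$, for every graph $G$ and every $k \ge 4$. Suppose this fails and let $G$ be a counterexample with the fewest edges. For any $e \in E(G)$, if $G - e \rightarrow K_k$ then by minimality $G - e \rightarrow K_k + K_{k-1}$, hence $G \rightarrow K_k + K_{k-1}$, a contradiction; so $G - e \nrightarrow K_k$ for every $e$, i.e.\ $G \in \cM(K_k)$. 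Thus it suffices to prove: \emph{every $K_k$-minimal graph is Ramsey for $K_k + K_{k-1}$}. For such a $G$ we may also use that $G - v \nrightarrow K_k$ for every vertex $v$ (so $G - U \nrightarrow K_k$ for every nonempty $U \subseteq V(G)$) and that $\delta(G) \ge s(K_k) = (k-1)^2$ by Burr, Erd\H{o}s, and Lov\'asz.

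\textbf{Structure of a bad colouring.} Fix $G \in \cM(K_k)$ and assume for contradiction that $\chi\colon E(G)\rightarrow\{\text{red, blue}\}$ has no monochromatic $K_k + K_{k-1}$; since $G \rightarrow K_k$ there is a monochromatic $K_k$, say a red one on a set $S$. For every red $K_k$ on a set $S'$, the graph $G - S'$ contains no red $K_{k-1}$ (else a red $K_k + K_{k-1}$ appears), so \emph{every red $K_{k-1}$ meets every red $K_k$}; feeding in the $(k-1)$-subsets of a red $K_k$ then forces any two red $K_k$'s to share at least two vertices, and symmetrically for blue. In particular, fixing a red $K_{k-1}$, say $T_0 \subseteq S$, we obtain a transversal of size $k-1$ of the family of vertex sets of red $K_k$'s.

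\textbf{How I would finish.} The naive hope — find three pairwise disjoint monochromatic $K_k$'s, two of which must share a colour and hence yield $K_k + K_{k-1}$ — is hopeless for $K_k$-minimal $G$, since $G - v \nrightarrow K_k$ forbids ``deleting a monochromatic $K_k$ and repeating''. Instead the plan is to turn the rigidity above into a contradiction with $G \rightarrow K_k$ by \emph{recolouring}: choose a small set of edges near $T_0$ (and near the analogous core of the blue $K_k$'s, if any), flip their colours, and argue the resulting colouring $\chi'$ of $G$ has no monochromatic $K_k$ at all. Heuristically, since $G - S'$ is red-$K_{k-1}$-free for every red $K_k$ $S'$, the red $K_k$'s are all crammed around the $(k-1)$-set $T_0$; recolouring the right red edges incident to $T_0$ should kill all of them, while ``any two red $K_k$'s meet in $\ge 2$ vertices'' together with $k \ge 4$ (so $|T_0| = k-1 \ge 3$) should leave enough slack to avoid creating a blue $K_k$. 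One then runs the symmetric argument on the blue side, and treats separately the case that $\chi$ has monochromatic $K_k$'s of only one colour. The restriction $k \ge 4$ is consistent with $f(2,1) = f(3,2) = 0$: when $k = 3$ a monochromatic $K_{k-1}$ is a single edge and none of this slack exists.

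\textbf{The main obstacle.} The recolouring step is the crux and is genuinely delicate. The structure lemma only yields pairwise intersections of size $\ge 2$ among the red $K_k$'s, not a common vertex — a sunflower-free family of $k$-sets realises exactly this — so there is no single vertex or edge whose removal destroys all red $K_k$'s; and flipping red edges around $T_0$ to blue risks turning $S = T_0 \cup \{v\}$ itself, or some other almost-complete blue set, into a blue $K_k$. Note too that although $\delta(G)\ge (k-1)^2$ forces $G$ to be large, this does not obviously help, precisely because $G - v \nrightarrow K_k$ prevents extracting disjoint monochromatic cliques by deleting bounded sets. It may well be that no purely local recolouring suffices and that one must exploit $G \rightarrow K_k$ more globally — e.g.\ a stability statement constraining where the monochromatic $K_k$'s and $K_{k-1}$'s can sit, or an amalgamation/signal-sender description of $G$ making the extra $K_{k-1}$ unavoidable. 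This is, I expect, where essentially all the difficulty of the conjecture lies.
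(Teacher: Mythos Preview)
This statement is a \emph{conjecture} in the paper, not a theorem: it appears in Section~\ref{sect:openproblems} (Open problems) and the paper offers no proof. So there is no ``paper's own proof'' to compare your attempt to; the question is genuinely open.

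Your preliminary material is sound. The reduction to $K_k$-minimal $G$ is correct, and your structural observations---that in a bad colouring every red $K_{k-1}$ meets every red $K_k$, and hence any two red $K_k$'s share at least two vertices---are valid and potentially useful. But, as you yourself say, this is not a proof: the recolouring step is not carried out, and the obstacles you list are real. Pairwise intersection of size $\ge 2$ among the red $K_k$'s does not give a small hitting set, so there is no obvious local flip that kills all red $K_k$'s without creating a blue one; and the fact that $G-v \nrightarrow K_k$ blocks any ``peel off a monochromatic clique and repeat'' argument. Your concluding sentence is accurate: essentially all the difficulty of the conjecture lies exactly where your write-up stops. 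What you have is a reasonable opening move and an honest assessment of why it does not close, not a proof.
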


We proved that every graph (other than $K_k$) that is Ramsey-equivalent to $K_k$ is not connected. This naturally leads to the following question. 

\begin{question}
Is there a pair of non-isomorphic connected graphs $H_1,H_2$ that are Ramsey-equivalent?
\end{question}

An interesting special case of this question is about pairs of graphs such that one contains the other.
This motivates the following question.

\begin{question}
Is there a connected graph $H$ which is Ramsey-equivalent to a graph formed by adding a pendent edge to $H$?
\end{question}

We have recently shown \cite{FGLPS} that $K_{t,t}$ and $K_{t,t} \cdot K_2$, the graph formed by adding a pendent edge to $K_{t,t}$, are {\em not} Ramsey-equivalent. Furthermore, we proved $s(K_{t,t} \cdot K_2)=1$ while it was shown in \cite{fl2006} that $s(K_{t,t})=2t-1$.

We do not have a good understanding of how large of a connected subgraph can be added to $K_k$ and still be Ramsey-equivalent to $K_k$. For example, we have the following problem. 

\begin{problem}
Let $g(k)$ be the maximum $g$ such that $K_k$ is Ramsey-equivalent to $K_k+K_{1,g}$, the disjoint union of $K_k$ and the star $K_{1,g}$ with $g$ leaves. Determine $g(k)$.
\end{problem}

We only know that $g(k)$ is at least linear in $k$ and at most exponential in $k$. 
\bibliographystyle{abbrv}
\bibliography{referencesAll}

\end{document}